\title{A note on automatic continuity}
\author{Gregory R. Conner}
\author{Samuel M. Corson}
\theoremstyle{definition}
\theoremstyle{definition}\newtheorem*{B}{Theorem \ref{tonsofexamples}}
\theoremstyle{definition}\newtheorem*{C}{Theorem \ref{countableslender}}
\theoremstyle{definition}\newtheorem{bigtheorem}{Theorem}
\theoremstyle{definition}\newtheorem{theorem}{Theorem}[section]
\theoremstyle{definition}
\theoremstyle{definition}\newtheorem{proposition}[theorem]{Proposition}
\theoremstyle{definition}\newtheorem{definition}[theorem]{Definition}
\theoremstyle{definition}\newtheorem{question}[theorem]{Question}
\theoremstyle{definition}
\theoremstyle{definition}\newtheorem{remark}[theorem]{Remark}
\theoremstyle{definition}
\theoremstyle{definition}\newtheorem{lemma}[theorem]{Lemma}
\theoremstyle{definition}
\theoremstyle{definition}
\theoremstyle{definition}
\theoremstyle{definition}
\theoremstyle{definition}
\newtheorem*{question*}{Question}
\newtheorem*{theorem*}{Theorem}
\newtheorem*{corollary*}{Corollary}
\newtheorem*{lemma*}{Lemma}
\def\pmc#1{\setbox0=\hbox{#1}
    \kern-.1em\copy0\kern-\wd0
    \kern.1em\copy0\kern-\wd0}
\newcommand{\W}{\mathcal{W}}
\newcommand{\HEG}{\textbf{HEG}}
\newcommand{\card}{\operatorname{card}}
\newcommand{\supp}{\operatorname{supp}}
\newcommand{\Ant}{\operatorname{Ant}}
\newcommand{\Frac}{\operatorname{frac}}
\newcommand{\rad}[2]{\sqrt[#1] #2}
\begin{document}

\address{Mathematics Department\\
        Brigham Young University\\
        Provo, UT. 84602\\
        USA}
\email{conner@math.byu.edu}

\address{Ikerbasque- Basque Foundation for Science and Matematika Saila, UPV/EHU, Sarriena S/N, 48940, Leioa - Bizkaia, Spain}

\email{sammyc973@gmail.com}
\keywords{slender group, Thompson's group, word hyperbolic group, completely metrizable group, locally compact Hausdorff group}
\subjclass[2010]{Primary 03E75, 54H11; Secondary 20E06}

\maketitle

\begin{abstract}  We present new results regarding automatic continuity, unifying some diagonalization concepts that have been developed over the years.  For example, any homomorphism from a completely metrizable topological group to Thompson's group $F$ has open kernel.  A similar claim holds when $F$ is replaced with a Baumslag-Solitar group or a torsion-free word hyperbolic group.

\end{abstract}

\begin{section}{Introduction}

Groups in which infinite multiplication can be defined often have difficulty in mapping to groups like free groups and the proof follows a diagonalization argument.  The first such argument was given in the abelian category of groups by Specker in \cite{Sp} and a proof for non-abelian groups was given shortly thereafter by Higman \cite{H}.  Later, Dudley modified these ideas to give some powerful theorems for topological groups in \cite{D}.  More recently such papers as \cite{E}, \cite{CC}, \cite{Na}, and \cite{C1} have used these techniques.  We distill some unifying themes of such arguments and strengthen them.  

The \textit{$j$-th radical} $\rad{j}{S}$ of a subset $S$ of a group $G$ is defined to be $\{g\in G \mid g^j\in S\}$.  A \textit{limiting sequence pair} for a group $G$ is a pair of sequences $F_1 \subseteq  F_2\subseteq \cdots$  and $k_1, k_2, \ldots$ with $F_n \subseteq G$ and $k_n \in \mathbb{N}$ such that for every natural number $n$:

\begin{enumerate}
\item $(\forall g \in G)(\exists m\in \mathbb{N})[gF_n \subseteq F_m]$
\item $\rad{k_n}{F_n} =\{1_G\} $
\item $\rad{k_m}{F_n} \subseteq F_n, ~ \forall m \leq n $

\end{enumerate}

\noindent By condition (2) we get $1_G \in F_n$ for all $n$, and by condition (1) together with $1_G \in F_n$ we get that $\bigcup_{n\in \mathbb{N}} F_n = G$.  As defined in \cite{E} a group $G$ is \emph{n-slender} provided the kernel of any homomorphism from the fundamental group of the Hawaiian earring to $G$ contains the kernel of some retraction to a finite bouquet of circles (see Section \ref{Proofofmaintheorem} for more details).  Alternatively a group $G$ is n-slender if and only if every homomorphism from the fundamental group of a connected, locally path connected metric space to $G$ has open kernel (in the sense of the paper \cite{C2}).  We say a group $G$ is \emph{cm-slender} if the kernel of any homomorphism from a completely metrizable group to $G$ is open.  Similarly $G$ is \emph{lcH-slender} if the kernel of any homomorphism from a locally compact Hausdorff group to $G$ is open.  These concepts can be thought of as strong automatic continuity conditions.  For example, a homomorphism from a completely metrizable group to a cm-slender group must be continuous.

We prove the following in Section \ref{Proofofmaintheorem}:

\begin{bigtheorem} \label{lspslender}  Each group with a limiting sequence pair is n-slender, cm-slender and lcH-slender.
\end{bigtheorem}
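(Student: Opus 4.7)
My plan hinges on a single \emph{Core Lemma}: if $H$ is a topological group and $h:H\to G$ is a homomorphism for which some open neighborhood $U$ of $1_H$ satisfies $h(U)\subseteq F_{n_0}$ for some index $n_0$, then $\ker h$ is open. The proof uses only condition (2): by continuity of multiplication, pick an open neighborhood $V$ of $1_H$ with $V^{k_{n_0}}\subseteq U$; then for each $v\in V$ we have $h(v)^{k_{n_0}}=h(v^{k_{n_0}})\in F_{n_0}$, so condition (2) forces $h(v)=1_G$, whence $V\subseteq\ker h$.

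With the Core Lemma in hand, the cm-slender and lcH-slender assertions reduce to producing a pair $(U,n_0)$ satisfying its hypothesis. Since $1_G\in F_n$ for every $n$ (via (2)) and $\bigcup_n F_n=G$, we have $H=\bigcup_n h^{-1}(F_n)$. Both completely metrizable and locally compact Hausdorff groups are Baire, so some $h^{-1}(F_{n_1})$ is nonmeager in $H$. Using a Baire-category argument together with the translation-absorption afforded by condition (1), one would extract an element $x\in H$ and an open neighborhood $U$ of $1_H$ with $xU\subseteq h^{-1}(F_{n_1})$; then $h(U)\subseteq h(x)^{-1}F_{n_1}\subseteq F_m$ for some $m$ by (1), and the Core Lemma applies.

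I expect the main obstacle to be this extraction step: the sets $h^{-1}(F_n)$ have no a priori Baire property, so ``nonmeager implies containing an open translate'' is not automatic. To handle it I would combine condition (3) with the nesting $F_n\subseteq F_{n+1}$: passing to the closures $\overline{h^{-1}(F_n)}$ and exploiting continuity of the $k_m$-th power map, condition (3) lets one pull back the radical inclusions so as to trap an open set inside some $h^{-1}(F_m)$. In the lcH case, the Haar-measure version of Steinhaus's theorem provides a cleaner alternative route, applied to some $h^{-1}(F_n)$ with positive Haar measure on a compact subset.

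For n-slenderness the source $\pi_1(HE)$ of the Hawaiian earring is not a topological group in the relevant sense, so the argument is combinatorial. Assuming $h:\pi_1(HE)\to G$ does not factor through any finite-bouquet retract, infinitely many loops $\ell_j$ satisfy $h(\ell_j)\neq 1_G$. Using the infinite-concatenation structure of $\pi_1(HE)$, I would assemble from a subsequence of such loops a single element $w\in\pi_1(HE)$ whose image lies in some $F_{n_0}$ by (1), while elements $v_n\in\pi_1(HE)$ constructed via tail-concatenations would serve as $k_n$-th roots in the sense that $v_n^{k_n}$ has controlled image in $F_n$; condition (2) then forces $h(v_n)=1_G$ for all sufficiently large $n$, propagating back via the concatenation structure to give $h(\ell_j)=1_G$ for all but finitely many $j$, the desired contradiction.
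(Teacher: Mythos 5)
Your ``Core Lemma'' is correct and its proof is fine: if some open $U\ni 1_H$ has $h(U)\subseteq F_{n_0}$, then choosing $V$ with $v^{k_{n_0}}\in U$ for all $v\in V$ and invoking condition (2) gives $V\subseteq\ker h$. The fatal problem is the step you yourself flag: producing such a $U$. Since $h$ is \emph{not} assumed continuous and the $F_n$ are arbitrary subsets of $G$, the sets $h^{-1}(F_n)$ are arbitrary subsets of $H$; the Pettis/Steinhaus mechanism (``nonmeager $\Rightarrow$ $AA^{-1}$ contains a neighborhood of $1$'') requires the Baire property, and the Haar-measure version requires measurability, neither of which is available. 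A countable cover of $H$ by such sets only guarantees that one of them is nonmeager, resp.\ of positive \emph{outer} measure, which is not enough (Bernstein-type sets defeat both conclusions). Your proposed repair --- passing to $\overline{h^{-1}(F_n)}$ and ``pulling back the radical inclusions'' via condition (3) --- is not an argument: a point of $\overline{h^{-1}(F_n)}$ carries no information about where $h$ sends it, precisely because $h$ need not be continuous, and condition (3) (closure of $F_n$ under $k_m$-th roots, $m\le n$) acts in $G$, not in $H$, so it cannot convert topological largeness of a preimage into containment of an open set. This is exactly the obstruction the paper's proof is designed to avoid: instead of seeking an open set mapped into some $F_{n_0}$, one assumes $\ker h$ is not open, picks $h_{n+1}\in V_n\setminus\ker h$ at every stage, and uses completeness (Cauchy limits of the partial products) or local compactness (a point of $\bigcap_n K_n$ for nested compacta) to build a single element $y_1=h_1(h_2(\cdots h_n(\cdot)^{m_n}\cdots)^{m_2})^{m_1}$; its image lies in some $F_l$ because $\bigcup_n F_n=G$, and the exponents $m_n=k_{j_{n,n}}$, chosen \emph{in advance for all possible $l$ simultaneously} via a doubly indexed sequence $j_{n,s}$ exploiting conditions (1) and (3), let one peel off $h_1,\dots,h_l$ and conclude $\phi(h_{l+1})=1_G$, a contradiction. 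No regularity of $h^{-1}(F_n)$ is ever needed.

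Your sketch for n-slenderness is in the right spirit --- it is essentially the paper's argument, with infinite concatenation in $\HEG$ playing the role of the metric or compactness limit --- but the phrase ``$v_n^{k_n}$ has controlled image in $F_n$'' conceals all of the actual work, namely the same advance choice of the exponent sequence $(m_n)_n$ (via Lemma \ref{thebiglemma}) that makes the peeling argument succeed for whichever $l$ satisfies $\phi(U_0)\in F_l$. As written, the proposal establishes none of the three conclusions; the cm- and lcH- cases need to be rebuilt on the diagonalization rather than on category or measure.
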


This theorem has fairly broad applications, and the form of the argument is one of the only techniques known to the authors for proving slenderness in a non-abelian group (see discussion in Section \ref{conclusion}).  We define some notions for the next theorem.  A group $G$ has \emph{finite roots} if for every $g\in G$ the set $\{h\in G \mid (\exists n\in \mathbb{N})[h^n = g]\}$ is finite.  We say $G$ has \emph{finite k-antecedents} if there exists $k\in \mathbb{N}$ such that for every $g\in G$ the set $\{h\mid(\exists n\in \mathbb{N})[h^{k^n} = g]\}$ is finite.  Certainly finite roots implies finite $k$-antecedents for every $k\in \mathbb{N}$.  Finite groups, for example, have finite roots.  In Section \ref{examples} we prove the following:

\begin{bigtheorem}\label{tonsofexamples}  The following have a limiting sequence pair:

\begin{enumerate}[(a)]

\item Groups with Dudley norm

\item Groups with uniformly monotone length function

\item Countable torsion-free groups with finite $k$-antecedents for some $k\in \mathbb{N}$

\item Countable torsion-free groups with finite roots

\item Free groups

\item Free abelian groups

\item $\mathbb{Z}[\frac{1}{m}]$ for each $m\in \mathbb{N}$

\item Torsion-free word hyperbolic groups

\item Baumslag-Solitar groups

\item Thompson's group $F$

\end{enumerate}

\end{bigtheorem}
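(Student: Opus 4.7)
The strategy is to partition the ten items into three tiers: an analytic tier (a)--(b), in which a norm or length function directly yields sublevel sets as the $F_n$; a countable tier (c), from which (d)--(g) follow by checking finite $k$-antecedents; and a structural tier (h)--(j), which requires group-theoretic input specific to each class.

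For (a) and (b), I would set $F_n=\{g\in G:|g|\le\varphi(n)\}$ for a suitably growing function $\varphi\colon\mathbb{N}\to\mathbb{N}$. A Dudley-type or uniform-monotonicity hypothesis provides a lower bound for $|h^{k_n}|$ in terms of $k_n$ and $|h|$, so choosing $k_n$ large enough relative to $\varphi(n)$ forces the only $h$ with $h^{k_n}\in F_n$ to be $1_G$, establishing (2). Condition (3) follows from the same estimate applied at levels $m<n$, provided $\varphi$ is tuned to grow slowly enough that every $k_m$-th root of an element of $F_n$ already lies in $F_n$, and (1) is immediate from the subadditive behaviour of the norm under translation.

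For (c), enumerate $G=\{g_1,g_2,\dots\}$, fix the witness $k$ for finite $k$-antecedents, and set $k_n=k^n$. I would build $F_n$ inductively as a finite set by a bookkeeping argument: ensure each $g_i$ is eventually added so that $\bigcup_n F_n=G$, ensure each translate $g_iF_n$ eventually lies in some $F_m$, and close $F_n$ under $k_m$-th antecedents for all $m<n$ to secure (3). The finite $k$-antecedents hypothesis keeps $F_n$ finite, and since any particular element of $G$ is a $k^n$-th power of a nontrivial element for only finitely many $n$, one may delay the inclusion of a potentially problematic element until $k_n$ is large enough to guarantee (2). From this template, (d) is immediate since finite roots implies finite $k$-antecedents for every $k$; countable free (e) and countable free abelian (f) groups are torsion-free with finite roots; and $\mathbb{Z}[\frac{1}{m}]$ has finite $k$-antecedents whenever $\gcd(k,m)=1$, giving (g). The uncountable-rank cases of (e) and (f) fall under (a)/(b) via word length and $\ell^{1}$-length respectively.

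The structural cases (h)--(j) remain. For (h), torsion-free word hyperbolic groups classically have finite roots (centralisers of nontrivial elements are cyclic in this setting), so (d) applies. For (i), Baumslag--Solitar groups $BS(m,n)=\langle a,b\mid ba^mb^{-1}=a^n\rangle$ need not have finite roots --- for example in $BS(2,4)$ one computes $(bab^{-1})^2=ba^2b^{-1}=a^4=(a^2)^2$ --- so one must verify finite $k$-antecedents directly, presumably for some $k$ coprime to $mn$, using the action on the Bass--Serre tree to bound the proliferation of roots under the HNN relation. For (j), I would exploit the reduced tree-diagram normal form of Thompson's group $F$ together with its total orderability to establish finite $k$-antecedents directly, likely for $k=2$. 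The main obstacle, I expect, will be (i): the failure of finite roots rules out a clean reduction to (d), so the verification of finite $k$-antecedents in $BS(m,n)$ must draw genuinely on the arithmetic of the defining relation and on the tree action, rather than a purely algebraic bound.
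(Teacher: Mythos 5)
Your tiering of (a)--(h) matches the paper's proof almost exactly: sublevel sets $B(1_G,n)$ of the norm for (a) and (b) (the paper simply takes $\varphi(n)=n$ with $k_n=n+1$, resp.\ $k_n=k^n$), and for (c) the antecedent-closed finite sets $F_n=\Ant_k\{g_1,\dots,g_n\}$ with $k_n$ a sufficiently large power of $k$; (d)--(h) then reduce as you say, with the uncountable free and free abelian groups handled by (a)/(b). One small inversion: the paper fixes $F_n$ first and then chooses $k_n$ large depending on the finite set $F_n$ (so that $x,x^{k^j}\in F_n$ with $j$ large forces $x=1_G$ by torsion-freeness), rather than fixing $k_n=k^n$ and delaying elements; your bookkeeping variant can be made to work but is the harder of the two to write down.

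The genuine gap is item (i). You correctly diagnose that finite roots fails (the paper itself notes that $t^2$ has infinitely many square roots in $BS(1,-1)$), but "use the Bass--Serre tree to bound the proliferation of roots" is not yet an argument, and finite $k$-antecedents for $BS(m,n)$ needs two separate inputs that your sketch does not supply. First, \emph{uniqueness} of $p$-th roots for primes $p>|m|,|n|$: the paper proves this in an appendix by a normal-form/Britton's-lemma analysis, splitting into the cases of cyclic length zero (where the condition $p>|m|,|n|$ is used to show a $p$-th root of an element of $\langle a\rangle$ lies in $\langle a\rangle$) and positive cyclic length (where a telescoping computation reduces uniqueness to the fact that $x^{q-1}+\cdots+x+1$ has no rational roots for odd $q$). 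Second, and more importantly, uniqueness alone does not give finite $p$-antecedents: you also need that the chain of successive $p$-th roots terminates, i.e.\ that no nontrivial element is divisible by $p^n$ for all $n$. The paper gets this from Newman's theorem on one-relator groups (for $p$ greater than the relator length, elements are divisible by only finitely many powers of $p$). Nothing in your outline plays the role of that divisibility bound, and without it the construction of $k_n$ in step (c) cannot get off the ground. For (j) your worry is unfounded but your workaround is unproven: Thompson's group $F$ does have finite roots (Guba--Sapir, Lemma 15.29), so it reduces cleanly to (d); a direct verification of finite $2$-antecedents via tree diagrams is neither carried out in your proposal nor needed.
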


By Theorem \ref{lspslender}, all such groups are n-slender, cm-slender and lcH-slender.  Groups (a), (e) and (f) were shown to be cm- and lcH-slender in \cite{D}.  Groups (e) were shown to be n-slender in \cite{H}.  Groups (f) and (g) were shown to be n-slender in \cite{E}.  Groups (i) were shown to be n-slender in \cite{Na}.  Groups (b) and (h) were shown to be n-slender in \cite{C1}.  That (j) is n-slender answers a question asked by the second author in \cite{C1}.

We also give some closure properties of n-, cm- and lcH-slender groups.  While such classes are obviously closed under taking subgroups, it is less obvious that they are closed under direct sums, free products and more generally under graph products (see Theorem \ref{graphproducts}).  That n-slender groups are closed under direct sums and free products was shown by Eda in \cite{E}, and closure under arbitrary graph products was shown more recently in \cite{C1}.  We also give the following:

\begin{bigtheorem}\label{countableslender}  If $A$ is an abelian group of cardinality $<2^{\aleph_0}$ the following are equivalent:

\begin{enumerate}
\item $A$ is slender

\item $A$ is n-slender

\item $A$ is cm-slender

\item $A$ is lcH-slender
\end{enumerate}

\end{bigtheorem}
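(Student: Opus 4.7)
My plan combines Eda's theorem identifying abelian n-slenderness with classical slenderness, universal test objects for the converse directions, and Theorem~\ref{lspslender} for the forward direction.

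For (1)$\iff$(2), Eda's theorem applies with no cardinality hypothesis. For (3)$\Rightarrow$(1), the Baer--Specker group $\mathbb{Z}^{\mathbb{N}}$ under the product topology is completely metrizable, and its open subgroups are exactly those containing the kernel of some finite projection; cm-slenderness of $A$ therefore forces any homomorphism $\mathbb{Z}^{\mathbb{N}}\to A$ to factor through a projection onto some finite $\mathbb{Z}^F$, which is Nunke's definition of slenderness. For (4)$\Rightarrow$(1), I invoke Nunke's characterization: slender abelian groups are precisely the torsion-free, reduced groups containing no copy of $\mathbb{Z}^{\mathbb{N}}$ or $\mathbb{Z}_p$. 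The cardinality bound $|A|<2^{\aleph_0}$ automatically excludes the last two obstructions. Torsion-freeness of $A$ follows from lcH-slenderness because the compact lcH group $\prod_{\mathbb{N}}\mathbb{Z}/n\mathbb{Z}$ admits, via non-principal ultrafilter limits, abstract homomorphisms onto $\mathbb{Z}/n\mathbb{Z}$ whose kernels are of finite index but not open; composing with the inclusion of a cyclic torsion subgroup into $A$ would violate lcH-slenderness. Reducedness is obtained similarly using a $\mathbb{Q}$-linear section $\mathbb{Q}_p\to\mathbb{Q}$ of the standard inclusion, a homomorphism from the lcH group $\mathbb{Q}_p$ whose kernel is dense and hence not open.

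For (1)$\Rightarrow$(3),(4), I apply Theorem~\ref{lspslender} after constructing a limiting sequence pair on $A$. By the cardinality bound and Nunke, $A$ is torsion-free and reduced, so reducedness provides, for each nonzero $a\in A$, some $n(a)\in\mathbb{N}$ with $a\notin n(a)A$. I would inductively build the pair $(F_n,k_n)$ with $k_n$ growing fast and chosen so that $F_n\cap k_n A=\{0\}$, which yields $\rad{k_n}{F_n}=\{0\}$ by torsion-freeness; the sets $F_n$ are taken to be closed under translation by previously added elements and under taking $k_m$-th roots for $m\leq n$. The main obstacle is the uncountable case, where each $F_n$ must itself be uncountable for the countable union to cover $A$ while the three limiting-sequence conditions hold uniformly. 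I would address this by fixing a well-ordering of $A$ of type $|A|$ and running a diagonal bookkeeping across the natural-number stages, using the cardinality bound $|A|<2^{\aleph_0}$ to ensure the radical conditions remain compatible with absorbing the required portions of $A$ into each $F_n$. Once the limiting sequence pair is constructed, Theorem~\ref{lspslender} delivers both cm- and lcH-slenderness.
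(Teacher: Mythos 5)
Your directions (1)$\Leftrightarrow$(2) (Eda's theorem), (3)$\Rightarrow$(1) (the Baer--Specker group as a Polish test object), and (4)$\Rightarrow$(1) (excluding torsion and $\mathbb{Q}$ via locally compact test groups, then invoking Nunke/S\c{a}siada together with the cardinality bound to rule out $\mathbb{Z}^{\mathbb{N}}$ and the $p$-adics) are all sound and essentially agree with what the paper does or notes in passing. The gap is in (1)$\Rightarrow$(3),(4), which is the entire substance of the theorem: you propose to build a limiting sequence pair on an arbitrary slender abelian group of cardinality $<2^{\aleph_0}$ and then quote Theorem~\ref{lspslender}, but the construction is never carried out --- you name ``the main obstacle'' and answer it with ``diagonal bookkeeping,'' which is not an argument. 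The difficulty is genuine. To satisfy condition (1) each $F_m$ must absorb translates $g+F_n$, to satisfy condition (3) it must then be closed under taking $k_j$-th roots for all $j\le m$, and only after that closure can one hope to choose $k_m$ so that $F_m$ meets $k_mA$ only in $0$ as condition (2) demands; a single integer $k_m$ witnessing non-divisibility simultaneously for every nonzero element of an infinite root-closed set need not exist. The paper's own remark following Theorem~\ref{tonsofexamples} shows that the countable slender group $\bigoplus_{m}\mathbb{Z}[\frac{1}{m}]$ has no limiting sequence pair with finite $F_n$ and explicitly states that it is not clear whether it has one at all; your plan would in particular settle that question affirmatively for every slender group of size $<2^{\aleph_0}$, so the burden of proof is on you and is not met.

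The paper avoids this entirely and proves (1)$\Rightarrow$(3),(4) directly. It first establishes Proposition~\ref{artinian}: for $H$ completely metrizable or locally compact Hausdorff and a homomorphism $\phi$ into a group of cardinality $<2^{\aleph_0}$, there is a neighborhood $V$ of $1_H$ with $\phi(V')=\phi(V)$ for every smaller neighborhood $V'$. If some $a\neq 1_A$ lay in $\phi(V)$, reducedness gives $m$ with $a\notin A^m$, unique root extraction in a torsion-free group gives $a^{r}\notin A^{rm}$, and one chooses $h_n$ with $\phi(h_n)=a^n$ so that the continuum many nested products $h_\epsilon=h_1^{\epsilon_1}(h_2^{\epsilon_2}(h_3^{\epsilon_3}(\cdots)^{3m})^{2m})^{m}$, $\epsilon\in\{0,1\}^{\mathbb{N}}$, all converge; the cardinality bound forces a collision $\phi(h_\epsilon)=\phi(h_{\epsilon'})$, which yields $a^{n}\in A^{nm}$, a contradiction. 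You should either adopt an argument of this type or actually produce the limiting sequence pair; as written, the forward direction is unproved.
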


That the abelian n-slender groups (of any cardinality) are precisely the slender groups was shown in \cite{E}.  We end with some discussion and questions in Section \ref{conclusion}.

\end{section}

\begin{section}{Proof of Theorem \ref{lspslender}}\label{Proofofmaintheorem}

We prove Theorem \ref{lspslender} in the sequence of Propositions \ref{lspnslender}, \ref{lspcmslender} and \ref{lsplcHslender}.  First, we recall the combinatorial description of the Hawaiian earring group.  Recall that the Hawaiian earring is a shrinking wedge of countably many circles.  More precisely, the Hawaiian earring is the space $E=\bigcup_{n\in \mathbb{N}} C((0, \frac{1}{n}),\frac{1}{n}) \subseteq \mathbb{R}^2$ where $C(q, r)$ is the circle centered at $q$ of radius $r$.  Let $\{a_n^{\pm 1}\}_{n\in \mathbb{N}}$ be a countably infinite set such that each element has a formal inverse.  A function $W: \overline{W} \rightarrow \{a_n^{\pm 1}\}_{n\in \mathbb{N}}$ is a \emph{word} if $\overline{W}$ is a countable totally ordered set and for each $N$ the set $W^{-1}(\{a_n^{\pm 1}\}_{n=1}^N)$ is finite.  Identify the words $W_1$ and $W_2$ if there exists an order isomorphism $\iota:\overline{W_1} \rightarrow \overline{W_2}$ such that $W_2(\iota(i)) = W_1(i)$ for all $i\in \overline{W_1}$.  Let $\W$ denote the set of words (under the identification, this is a set with cardinality $2^{\aleph_0}$).  For each $N\in \mathbb{N}$ let $p_N$ be the map from $\W$ to the subset of finite words in $\W$ given by the restriction $p_N(W) = W|\{i\in \overline{W}\mid W(i) \in \{a_n^{\pm 1}\}_{n=1}^{N}\}$.  We write $W_1 \sim W_2$ if for every $N\in \mathbb{N}$ we have $p_N(W_1)$ equal to $p_N(W_2)$ as elements of the free group $F(\{a_n\}_{n=1}^N)$.  

Given words $W_1, W_2\in \W$ we have a word $W_1W_2$ by letting $\overline{W_1W_2}$ be the disjoint union $\overline{W_1} \sqcup \overline{W_2}$ under the order that places all elements of  $\overline{W_1}$ less than those of $\overline{W_2}$ and extends the orders of both $\overline{W_1}$ and $\overline{W_2}$, and $W_1W_2(i) = \begin{cases}W_1(i)$ if $i\in \overline{W_1}\\W_2(i)$ if $i\in \overline{W_2} \end{cases}$.  We form a group $\HEG= \W/\sim$, with binary operation defined by word concatenation: $[W_1][W_2] = [W_1W_2]$.  The identity element is the equivalence class of the word with empty domain.  From a word $W$ we define $W^{-1}$ by letting $\overline{W^{-1}}$ have the reverse order of $\overline{W}$ and $W^{-1}(i) = (W(i))^{-1}$.  This gives the inverse in the group: $[W]^{-1}= [W^{-1}]$.

For each $N\in \mathbb{N}$ the word map $p_N$ defines a retraction homomorphism (which we again denote $p_N$) to the free subgroup $F(\{a_n\}_{n=1}^N)$ in $\HEG$.  Similarly we have a word map $p^N$ defined by the restriction $p^N(W) = W|\{i\in \overline{W}\mid W(i) \notin \{a_n^{\pm 1}\}_{n=1}^{N}\}$.  This word map also defines a retraction $p^N$ to the subgroup of $\HEG$ consisting of those equivalence classes containing a word which involves no elements in $\{a_n^{\pm 1}\}_{n=1}^N$.  Let $\HEG_N$ denote the image of $p_N$ and $\HEG^N$ denote the image of $p^N$.  We have an isomorphism $\HEG \simeq \HEG_N * \HEG^N$ by considering a word as a finite concatenation of words using elements in $\{a_n^{\pm 1}\}_{n=1}^N$ and words excluding such elements.

We are now ready to recall a definition (originally from \cite{E}):

\begin{definition}  A group $G$ is \emph{noncommutatively slender} (or \emph{n-slender}) if for every homomorphism $\phi:\HEG \rightarrow G$ there is an $N\in \mathbb{N}$ such that $\phi = \phi \circ p_N$.  Equivalently, for every homomorphism $\phi:\HEG \rightarrow G$ there exists $N$ such that $\phi|\HEG^N$ is the trivial map.
\end{definition}

Alternatively a group $G$ is n-slender if any homomorphism $\phi$ from the fundamental group of a path connected, locally path connected metric space $Y$ to $G$ has open kernel (see Lemma 6.2 in \cite{C2}).  We proceed to the proof of Theorem \ref{lspslender}.

\begin{lemma}\label{thebiglemma}  Suppose $G$ has limiting sequence pair $(F_n)_n$, $(k_n)_n$ and that $(g_n)_n$ is a sequence in $G$.  There exists a sequence $(m_n)_n$ in $\mathbb{N}$ such that for any $l\in \mathbb{N}$ we have 

\begin{center}  $g_1(g_2(\cdots g_{l-1}(g_lx^{m_l})^{m_{l-1}}      \cdots)^{m_2})^{m_1} \in F_l$
\end{center}
  
\noindent  implies $x = 1_G$.
\end{lemma}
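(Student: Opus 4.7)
I would prove this by a layer-by-layer peeling of the nested expression, choosing each $m_j$ so that conditions (2) and (3) of the limiting sequence pair let us strip off one power at a time.

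First, using condition (1), for each $g \in G$ and $n \in \mathbb{N}$ I would pick $\mu(g,n) \in \mathbb{N}$ with $\mu(g,n) \geq n$ and $gF_n \subseteq F_{\mu(g,n)}$, arranging $\mu(g,\cdot)$ to be non-decreasing (possible because $(F_n)_n$ is increasing). Next, I would define integers $N_j^{(l)}$ recursively by $N_0^{(l)} = l$ and $N_j^{(l)} = \mu(g_j^{-1}, N_{j-1}^{(l)})$; a routine induction on $j$ shows that for each fixed $j$ the map $l \mapsto N_j^{(l)}$ is non-decreasing. Finally, I would set $m_j := k_{N_j^{(j)}}$.

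To verify this works, fix $l \in \mathbb{N}$ and suppose the given expression lies in $F_l$. Define $Y_l := x$ and $Y_j := g_{j+1} Y_{j+1}^{m_{j+1}}$ for $0 \leq j < l$, so that $Y_0$ is the full expression and $Y_j^{m_j} = g_j^{-1} Y_{j-1}$ for $1 \leq j \leq l$. I would prove by induction on $j$ that $Y_j \in F_{N_j^{(l)}}$ for $0 \leq j \leq l-1$. The base case $j = 0$ is the hypothesis. For the inductive step with $j < l$, the hypothesis $Y_{j-1} \in F_{N_{j-1}^{(l)}}$ yields $Y_j^{m_j} = g_j^{-1} Y_{j-1} \in F_{N_j^{(l)}}$ by definition of $N_j^{(l)}$; applying condition (3) with $m = N_j^{(j)}$ and $n = N_j^{(l)}$ (permitted because $m_j = k_{N_j^{(j)}}$ and $N_j^{(j)} \leq N_j^{(l)}$) gives $Y_j \in F_{N_j^{(l)}}$. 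The same computation at $j = l$ yields $x^{m_l} \in F_{N_l^{(l)}}$, and condition (2) with $n = N_l^{(l)}$ then forces $x = 1_G$.

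The main subtlety I anticipate is that a single choice of $m_j$ must serve all $l \geq j$ simultaneously. The value $m_j = k_{N_j^{(j)}}$ is dictated by the extremal case $l = j$, where condition (2) is the only tool available; for larger $l$ it is condition (3) that rescues us, and this works precisely because the indices $N_j^{(l)}$ grow with $l$, so the subscript $N_j^{(j)}$ appearing in $m_j$ never exceeds the subscript of the enclosing $F_{N_j^{(l)}}$.
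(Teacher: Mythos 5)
Your proof is correct and takes essentially the same route as the paper's: your indices $N_s^{(l)}$ are the paper's $j_{l,s}$, with $m_s = k_{N_s^{(s)}} = k_{j_{s,s}}$ coming from the diagonal in both cases. Your monotonicity of $l \mapsto N_j^{(l)}$ plays exactly the role of the paper's insistence that the $j_{n,s}$ increase along the diagonal enumeration, which is what guarantees $j_{s,s} \leq j_{l,s}$ and hence that condition (3) applies at each peeling step.
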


\begin{proof}  We define natural numbers $j_{n, s}$ where $n \geq s \geq 1$.  Pick $j_{1, 1}>1$ such that $g_1^{-1}F_1 \subseteq F_{j_{1, 1}}$ by condition (1) for a limiting sequence.  Pick $j_{2, 1}>j_{1, 1}$ so that $g_1^{-1}F_2 \subseteq F_{j_{2, 1}}$ and $j_{2, 2}>j_{1, 1}$ so that $g_2^{-1}F_{j_{2, 1}}\subseteq F_{j_{2, 2}}$.  Given $n\in \mathbb{N}$ we pick $j_{n, 1}>j_{n-1, n-1}$ so that $g_1^{-1}F_n \subseteq F_{j_{n, 1}}$ and assuming we have selected $j_{n, s}$ for $s<n$ we let $j_{n, s+1}>j_{n, s}$ be such that $g_{s+1}^{-1}F_{j_{n, s}}\subseteq F_{j_{n, s+1}}$.  Let $m_n = k_{j_{n, n}}$.

Suppose for contradiction that for some $l\in \mathbb{N}$ and $x\in G\setminus \{1_G\}$ we have $g_1(g_2(\cdots g_{l-1}(g_lx^{m_l})^{m_{l-1}}      \cdots)^{m_2})^{m_1} \in F_l$.  Then $(g_2(g_3  (\cdots g_{l-1}(g_lx^{m_l})^{m_{l-1}}\cdots)^{m_3})^{m_2})^{m_1} \in g_1^{-1}F_l \subseteq F_{j_{l, 1}}$.  Then $g_2(g_3  (\cdots g_{l-1}(g_lx^{m_l})^{m_{l-1}}\cdots)^{m_3})^{m_2} \in F_{j_{l, 1}}$ by condition (3) for a limiting sequence.  Then $$(g_3  (\cdots g_{l-1}(g_lx^{m_l})^{m_{l-1}}\cdots)^{m_3})^{m_2} \in g_2^{-1}F_{j_{l, 1}}\subseteq F_{j_{1, 2}}$$ and continuing in this way we see that $g_{s}(\cdots (g_lx^{m_l})^{m_{l-1}}\cdots )^{m_s}\in F_{j_{l, s-1}}$, so that eventually $x^{m_l}=x^{k_{j_{l, l}}}\in F_{j_{l, l}}$, contradicting condition (2) for a limiting sequence.
\end{proof}

\begin{proposition}\label{lspnslender}  If $G$ has a limiting sequence pair then $G$ is n-slender.
\end{proposition}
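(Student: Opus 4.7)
The plan is to negate n-slenderness and use Lemma \ref{thebiglemma} to expose a contradiction. Suppose $\phi \colon \HEG \to G$ is a homomorphism with $\phi|\HEG^N$ nontrivial for every $N \in \mathbb{N}$. For each $n \geq 1$ I will pick a word $w_n$ representing an element of $\HEG^n$ with $\phi(w_n) \neq 1_G$, chosen so that $w_n$ uses only the letters $a_i$ with $i > n$; such a representative exists because $\HEG^n$ is exactly the image of $p^n$. Set $g_n := \phi(w_n)$ and feed the sequence $(g_n)_n$ into Lemma \ref{thebiglemma} to obtain the corresponding sequence $(m_n)_n$.

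Next I assemble a single element $W \in \HEG$ from the infinite nested recursion $W_l = w_l \cdot W_{l+1}^{m_l}$, with $W := W_1$. Concretely, the block $w_n$ appears $\tau_{n-1} := m_1 m_2 \cdots m_{n-1}$ times in $W$ (with $\tau_0 = 1$), arranged in the order dictated by the nesting. Because $w_n$ uses only letters $a_i$ with $i > n$, for each fixed index $i$ only blocks with $n < i$ can contribute, so each letter $a_i$ appears at most $\sum_{n<i} \tau_{n-1}\,|w_n|_{a_i}$ times in $W$, which is finite. This makes $W$ a legitimate element of $\HEG$, and by construction it decomposes at each depth $l$ as the $\HEG$-element $w_1 \cdot (w_2 \cdot (\cdots (w_l \cdot W_{l+1}^{m_l})^{m_{l-1}} \cdots)^{m_2})^{m_1}$.

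Applying $\phi$ to this depth-$l$ decomposition yields
\[
\phi(W) \;=\; g_1 \bigl(g_2 \bigl(\cdots (g_l \cdot y_{l+1}^{m_l})^{m_{l-1}} \cdots \bigr)^{m_2} \bigr)^{m_1},
\]
where $y_{l+1} := \phi(W_{l+1})$. Condition (1) of the limiting sequence pair applied to $g = \phi(W)$, together with $1_G \in F_1$, places $\phi(W)$ in $F_{m^*}$ for some $m^* \in \mathbb{N}$, and hence in $F_l$ for every $l \geq m^*$ by monotonicity of $(F_n)_n$. Lemma \ref{thebiglemma} then forces $y_{l+1} = 1_G$, i.e.\ $\phi(W_{l+1}) = 1_G$, for every $l \geq m^*$. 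Consequently, for $l > m^*$ the identity $\phi(W_l) = g_l \cdot \phi(W_{l+1})^{m_l}$ collapses to $1_G = g_l \cdot 1_G$, yielding $g_l = 1_G$ and contradicting $\phi(w_l) \neq 1_G$.

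The delicate step is verifying that the infinite nested word $W$ is an honest element of $\HEG$; this relies crucially on placing each $w_n$ entirely in the tail $\{a_i : i > n\}$, without which the multiplicities $\tau_{n-1}$ could cause some letter to appear infinitely often. Once that bookkeeping is in place, the rest is a mechanical pairing of the combinatorial nesting identity $W_l = w_l \cdot W_{l+1}^{m_l}$ with the conclusion of Lemma \ref{thebiglemma}, using only condition (1) and the fact that every element of $G$ eventually falls into some $F_n$.
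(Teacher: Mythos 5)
Your proof is correct and follows essentially the same route as the paper: build the infinite nested word from the recursion $W_l = w_l W_{l+1}^{m_l}$, note that its image lands in some $F_l$, and invoke Lemma \ref{thebiglemma} at two consecutive depths to force some $g_l = 1_G$. Your explicit verification that each letter $a_i$ occurs only finitely often in the nested word (by placing $w_n$ in $\HEG^n$) is a detail the paper's proof leaves implicit, but it is the same argument.
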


\begin{proof}  Suppose $G$ has limiting sequence pair $(F_n)_n$, $(k_n)_n$.  Suppose for contradiction that $G$ is not n-slender, so that there exists a homomorphism $\phi:\HEG \rightarrow G$ with a sequence $W_n \in \HEG^n$ such that $\phi(W_n) = g_n \neq 1_G$.  Pick the sequence $(m_n)_n$ as in Lemma \ref{thebiglemma}.    For each $i\in \mathbb{N}$ define a word $U_i\in \HEG$ by letting $$U_i = W_{i+1}(W_{i+2}(\cdots W_{i+k}(\cdots)^{m_{i+k}}  \cdots)^{m_{i+2}})^{m_{i+1}}$$ or in other words $U_{i-1} = W_iU_i^{m_i}$, and consequently $$U_0 = W_1(W_2(\cdots)^{m_2})^{m_1}$$  We have $\phi(U_0) \in F_l \subseteq F_{l+1}$ for some $l\in \mathbb{N}$.  But by the conclusion of Lemma \ref{thebiglemma} we know that $\phi(U_l) = \phi(U_{l+1})=1_G$, which implies $ g_l = \phi(W_l) = 1_{G}$, a contradiction.
\end{proof}

\begin{proposition}\label{lspcmslender}  If $G$ has a limiting sequence pair then $G$ is cm-slender.
\end{proposition}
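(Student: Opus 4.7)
The plan is to mirror the proof of Proposition \ref{lspnslender}, but to realize the analogue of $U_0$ inside the topological group $H$ as a limit of finite nested products, using the complete metric to secure convergence. Suppose, for contradiction, that $H$ is completely metrizable and that $\phi : H \to G$ has non-open kernel $K$. Fix a compatible complete metric $d$ on $H$; then every neighborhood of $1_H$ meets $H \setminus K$.

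I will build sequences $(h_n)_{n \in \mathbb{N}} \subseteq H$ and $(m_n)_{n \in \mathbb{N}} \subseteq \mathbb{N}$ simultaneously by recursion, setting $g_n = \phi(h_n)$. The crucial observation, visible from the proof of Lemma \ref{thebiglemma}, is that $m_n = k_{j_{n,n}}$ depends only on $g_1, \ldots, g_n$. At stage $n$, having already chosen $h_k$ and $m_k$ for $k < n$, define for each $N \geq 1$ the truncated nested products
\[
u^{(N)}_N = 1_H, \qquad u^{(N)}_{i-1} = h_i (u^{(N)}_i)^{m_i} \text{ for } 1 \leq i \leq N.
\]
Viewed as a function of a candidate $h_n$, each $u^{(n)}_i$ with $i < n$ is continuous and reduces to $u^{(n-1)}_i$ at $h_n = 1_H$ (since $u^{(n)}_{n-1} = h_n = 1_H = u^{(n-1)}_{n-1}$ and one recurses outward). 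By continuity of multiplication and of the power maps, a sufficiently small neighborhood $V_n$ of $1_H$ forces $d(u^{(n)}_i, u^{(n-1)}_i) < 2^{-n}$ for every $i < n$; since $K$ is not open, I pick $h_n \in V_n \setminus K$, obtaining $g_n \neq 1_G$ and then $m_n$ from Lemma \ref{thebiglemma}.

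For each fixed $i$, the telescoping control $\sum_{N \geq i+1} 2^{-N} < \infty$ makes $(u^{(N)}_i)_{N \geq i}$ Cauchy in $d$, hence convergent by completeness to some $u_i \in H$. Passing to the limit in the defining recursion, continuity of the group operations gives $u_{i-1} = h_i u_i^{m_i}$ for every $i \geq 1$. Applying $\phi$ and iterating,
\[
\phi(u_0) = g_1(g_2(\cdots g_{l-1}(g_l \phi(u_l)^{m_l})^{m_{l-1}} \cdots)^{m_2})^{m_1}
\]
for every $l \geq 1$. Condition (1) of the limiting sequence pair furnishes some $l$ with $\phi(u_0) \in F_l$, so Lemma \ref{thebiglemma} (with $x = \phi(u_l)$) forces $\phi(u_l) = 1_G$; since $F_l \subseteq F_{l+1}$, the same lemma at level $l+1$ gives $\phi(u_{l+1}) = 1_G$. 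But then $\phi(u_l) = g_{l+1} \phi(u_{l+1})^{m_{l+1}} = g_{l+1} \neq 1_G$, a contradiction.

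The chief difficulty is the interdependence of the choices: the $m_n$ enter the definition of $u^{(n)}_i$, while the smallness demanded of $h_n$ depends on $m_1, \ldots, m_{n-1}$. The recursion closes only because $m_n$ is determined by $g_1, \ldots, g_n$ alone, and because $u^{(n)}_i$ for $i < n$ uses only $h_{i+1}, \ldots, h_n$ together with $m_i, \ldots, m_{n-1}$, all already fixed by the end of stage $n-1$. Everything else is a routine compactness-of-finite-conditions argument combined with continuity of the group operations.
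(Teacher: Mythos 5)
Your argument is correct and follows essentially the same route as the paper's own proof: build $h_n$ and $m_n$ by interleaved recursion (using that $m_n$ in Lemma \ref{thebiglemma} depends only on $g_1,\ldots,g_n$), use completeness to obtain the limits $u_i$ of the nested partial products with $u_{i-1}=h_iu_i^{m_i}$, and derive the contradiction from $\phi(u_0)\in F_l\subseteq F_{l+1}$. The only cosmetic difference is that you invoke Lemma \ref{thebiglemma} directly at the end where the paper re-runs its unwinding argument inline.
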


\begin{proof}  Let $G$ have a limiting sequence pair.  Let $H$ be a topological group with complete metric $d$ and $\phi: H\rightarrow G$ a homomorphism.  We show that $\phi^{-1}(1_G)$ must contain a neighborhood of $1_H$.  We define a sequence $(h_n)_n$ in $H$ and sequences $\{j_{n, s}\}_{n \geq s \geq 1}$ and $(m_n)_n$ in $\mathbb{N}$.

Suppose for contradiction that the conclusion fails.  Pick $h_1 \in H\setminus \ker(\phi)$, let $j_{1,1}>1$ be such that $\phi(h_1^{-1})F_1 \subseteq F_{j_{1, 1}}$ and let $m_1 = k_{j_{1, 1}}$.  There is an open neighborhood $V_1$ of $1_H$ so that $h\in V_1$ implies $d(h_1h^{m_1}, h_1) <2^{-1}$.  Pick $h_2\in V_1\setminus \ker(\phi)$.  Pick $j_{2, 1}>j_{1, 1}$ and $j_{2, 2} >j_{2, 1}$ so that $\phi(h_1^{-1})F_2 \subseteq F_{j_{2, 1}}$ and $\phi(h_2^{-1}) F_{j_{2, 1}} \subseteq F_{j_{2, 2}}$.  Let $m_2 = k_{j_{2, 2}}$.  There is an open neighborhood $V_2$ of $1_H$ such that for any $h\in V_2$ we have both

\begin{center} $d(h_1(h_2(h)^{m_2})^{m_1}, h_1h_2^{m_{1}})<2^{-2}$
\end{center}

\noindent and

\begin{center}  $d(h_2, h_2h^{m_2})< 2^{-2}$
\end{center}

Supposing $h_1, \ldots, h_{n}$, $j_{q, s}$ for $n\geq q \geq s \geq 1$ and $m_1,\ldots, m_{n}$ and neighborhoods $V_1, \ldots, V_n$ have been chosen in this way, we select $h_{n+1} \in V_n \setminus \ker(\phi)$.  Select $j_{n+1, s}$ with $1\leq s\leq n+1$ such that $j_{n, n}<j_{n+1, 1}< j_{n+1, 2}< \cdots< j_{n+1, n+1}$ so that $\phi(h_1^{-1})F_{n+1}\subseteq F_{j_{n+1, 1}}$, $\phi(h_2^{-1})F_{j_{n+1, 1}} \subseteq F_{j_{n+1, 2}}, \cdots,$ $\phi(h_{n+1}^{-1})F_{j_{n+1, n}}\subseteq F_{j_{n+1, n+1}}$.  Let $m_{n+1} = k_{j_{n+1, n+1}}$.  Select a neighborhood of identity $V_{n+1}$ so that $h\in V_{n+1}$ implies 

\begin{center}  $d(h_1(\cdots  (h_{n+1}h^{m_{n+1}})^{m_{n}}\cdots )^{m_1}, h_1(\cdots h_{n+1}^{m_n}  \cdots)^{m_1})<2^{-n-1}$

$d(h_2(\cdots  (h_{n+1}h^{m_{n+1}})^{m_{n}}\cdots )^{m_2}, h_2(\cdots h_{n+1}^{m_n}  \cdots)^{m_2})<2^{-n-1}$

$\vdots$

$d(h_{n+1}h^{m_{n+1}}, h_{n+1})<2^{-n-1}$
\end{center}

Fixing $n \in \mathbb{N}$ we have that $r \geq n$ implies

\begin{center}
$d(h_n(h_{n+1}(\cdots  h_r^{m_{r-1}}\cdots)^{m_{n+1}})^{m_{n}}, h_n(h_{n+1}(\cdots  (h_rh_{r+1}^{m_{r}})^{m_{r-1}}\cdots)^{m_{n+1}})^{m_{n}})<2^{-r-1}$
\end{center}

\noindent and so the sequence is Cauchy in $r$ and we let $y_n$ be the limit.  We have the relationships $y_n = h_n(y_{n+1})^{m_n}$ for all $n\in \mathbb{N}$.  We know $\phi(y_1) \in F_l$ for some $l\in \mathbb{N}$.  Notice also that by construction $1< j_{1, 1}<j_{2, 2}<\cdots$.  As $y_1 = h_1(h_2(\cdots    h_l(y_{l+1})^{m_l}\cdots)^{m_2})^{m_1}$, we know $\phi((h_2(\cdots    h_l(y_{l+1})^{m_l}\cdots)^{m_2}))^{m_1} \in \phi(h_1^{-1})F_l \subseteq F_{j_{l, 1}}$, so that $\phi(h_2(\cdots    h_l(y_{l+1})^{m_l}\cdots)^{m_2}) \in F_{j_{l, 1}}$.  Then $\phi((h_3(\cdots  h_l(y_{l+1})^{m_l}\cdots)^{m_3})^{m_2}) \in \phi(h_2^{-1})F_{j_{l, 2}}$.  Eventually we see that $\phi(y_{l+1}^{m_l}) \in \phi(h_l^{-1})F_{j_{l, l-1}}\subseteq F_{j_{l, l}}$, and since $m_l = k_{j_{l, l}}$ we have $\phi(y_{l+1}) = 1_G$.  Since $\phi(y_1) \in F_l \subseteq F_{l+1}$ the comparable proof reveals that $\phi(y_{l+2}) = 1_G$ as well.  Then $1_G = \phi(y_{l+1}) = \phi(h_{l+1})(\phi(y_{l+2}))^{m_{l+1}} = \phi(h_{l+1})$, contradicting our choice of $h_{l+1}\in V_l \setminus \ker(\phi)$.
\end{proof}

\begin{proposition}\label{lsplcHslender}  If $G$ has a limiting sequence pair then $G$ is lcH-slender.
\end{proposition}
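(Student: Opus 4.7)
The plan is to parallel Proposition \ref{lspcmslender}, replacing completeness and Cauchy convergence by local compactness and ultrafilter limits in compact sets. Let $G$ have limiting sequence pair $(F_n)_n$, $(k_n)_n$, let $H$ be a locally compact Hausdorff group, and let $\phi:H\to G$ be a homomorphism. Suppose for contradiction that $\ker(\phi)$ is not open, fix a non-principal ultrafilter $\mathcal{U}$ on $\mathbb{N}$, and let $V_0$ be a compact symmetric neighborhood of $1_H$.

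I would then carry out an inductive construction whose algebraic side is identical to that in Proposition \ref{lspcmslender}: at stage $n$ pick $h_n \in V_{n-1} \setminus \ker(\phi)$ (possible since $V_{n-1}$ has nonempty interior and $\ker(\phi)$ is not open), select indices $j_{n,s}$ using condition (1) of the limiting sequence pair together with $m_n = k_{j_{n,n}}$ exactly as in the cm-slender proof, and choose a compact symmetric neighborhood $V_n \subseteq V_{n-1}$ of $1_H$. The new topological requirement on $V_n$ is that $(V_n^2)^{m_n} \subseteq V_{n-1}$, i.e.\ $(v_1 v_2)^{m_n} \in V_{n-1}$ whenever $v_1,v_2 \in V_n$. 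Since the map $(a,b) \mapsto (ab)^{m_n}$ is continuous and sends $(1_H,1_H)$ to $1_H \in \operatorname{int}(V_{n-1})$, sufficiently small compact symmetric $V_n$ satisfy this.

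For $r \geq n$ set $y_n^r := h_n(h_{n+1}(\cdots h_r^{m_{r-1}} \cdots)^{m_{n+1}})^{m_n}$, so that $y_n^n = h_n$ and $y_n^r = h_n(y_{n+1}^r)^{m_n}$. A downward induction on $n$ with $r$ fixed shows $y_n^r \in V_{n-1}^2$: granting $y_{n+1}^r \in V_n^2$, one has $(y_{n+1}^r)^{m_n} \in (V_n^2)^{m_n} \subseteq V_{n-1}$, and hence $y_n^r = h_n(y_{n+1}^r)^{m_n} \in V_{n-1} \cdot V_{n-1} = V_{n-1}^2$. Because $V_{n-1}^2$ is compact Hausdorff, the ultrafilter limit $y_n := \lim_{r\to\mathcal{U}} y_n^r$ exists in $V_{n-1}^2$, and continuity of the group operations yields the relations $y_n = h_n y_{n+1}^{m_n}$ for all $n$.

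From this point the argument is formally identical to that of Proposition \ref{lspcmslender}: $\phi(y_1) \in F_l$ for some $l$ since $\bigcup_n F_n = G$, and the chain of implications using conditions (2) and (3) of the limiting sequence pair (together with $1 < j_{1,1} < j_{2,2} < \cdots$ and $m_n = k_{j_{n,n}}$) unwinds $y_1 = h_1(h_2(\cdots (h_l y_{l+1}^{m_l}) \cdots)^{m_2})^{m_1}$ to yield $\phi(y_{l+1}) = 1_G$; applying the same reasoning to $\phi(y_1) \in F_l \subseteq F_{l+1}$ also gives $\phi(y_{l+2}) = 1_G$, so that $\phi(h_{l+1}) = \phi(y_{l+1})\phi(y_{l+2})^{-m_{l+1}} = 1_G$, contradicting $h_{l+1} \notin \ker(\phi)$. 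The main obstacle is producing the elements $y_n$ in the absence of a metric; this is resolved by the condition $(V_n^2)^{m_n} \subseteq V_{n-1}$, which traps every tail $\{y_n^r : r \geq n\}$ in a single compact set where an ultrafilter limit can be extracted.
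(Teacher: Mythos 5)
Your proposal is correct and follows essentially the same diagonalization as the paper: the choices of $h_n$, $j_{n,s}$, $m_n$ and the final algebraic contradiction are identical to the cm-slender argument, and the only novelty is the device for extracting the limit points. Where you trap each tail $\{y_n^r : r\geq n\}$ in the compact set $V_{n-1}^2$ via the condition $(V_n^2)^{m_n}\subseteq V_{n-1}$ and take an ultrafilter limit, the paper instead forms the nested nonempty compact sets $K_n = h_1(h_2(\cdots h_n\overline{V_n}^{m_n}\cdots)^{m_2})^{m_1}$ and picks $y\in\bigcap_n K_n$ by the finite intersection property --- two interchangeable compactness mechanisms for the same proof.
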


\begin{proof}  Let $\phi:H \rightarrow G$ be a homomorphism with $H$ locally compact Hausdorff and $G$ having a limiting sequence pair.  Suppose for contradiction that $\ker(\phi)$ does not contain an open neighborhood of $1_H$.  Pick an open neighborhood $V_0$ of $1_H$ with $\overline{V_0}$ compact.  Pick $h_1\in H\setminus \ker(\phi)$, pick $j_{1, 1}>1$ such that $\phi(h_1^{-1})F_1 \subseteq F_{j_{1, 1}}$ and let $m_1 = k_{j_{1, 1}}$.  Pick $V_1\subseteq V_0$ such that $h\in V_1$ implies $h^{m_1}\in V_0$.  Pick $h_2\in V_1 \setminus \ker(\phi)$.  Select $j_{2, 1}$ and $j_{2, 2}$ so that $j_{1, 1}< j_{2, 1}<j_{2, 2}$ and $\phi(h_1^{-1})F_2 \subseteq F_{j_{2, 1}}$ and $\phi(h_2^{-1})F_{j_{2, 1}} \subseteq F_{j_{2, 2}}$.  Let $m_2 = j_{2, 2}$ and select $V_2\subseteq V_1$ so that $h\in V_2$ implies $h_2h^{m_2}\in V_1$.

Continue in this way, selecting $j_{n+1, s}$ for $1\leq s\leq n+1$ with $j_{n, n}<j_{n+1, 1}<\cdots <j_{n+1, n+1}$ such that $\phi(h_1^{-1}) F_{n+1}\subseteq F_{j_{n+1, 1}}, \phi(h_2^{-1})F_{j_{n+1, 1}} \subseteq F_{j_{n+1, 2}}, \ldots, \phi(h_{n+1}^{-1})F_{j_{n+1, n}}\subseteq F_{j_{n+1, n+1}}$.   Let $m_{n+1} = k_{j_{n+1, n+1}}$.  Select $V_{n+1}$ so that $h\in V_{n+1}$ implies $h_{n+1}h^{m_{n+1}}\in V_n$.  Define $K_n = h_1(h_2(\cdots h_{n-1}(h_n\overline{V_n}^{m_n}  )^{m_{n-1}}\cdots)^{m_2})^{m_1}$.  We have $K_1 \supseteq K_2 \supseteq \cdots$, and all $K_n$ are non-empty compact, thus $\bigcap_{n\in \mathbb{N}}K_n$ is nonempty.  Let $y \in \bigcap_{n}K_n$.  We have $\phi(y) \in F_l$ for some $l\in \mathbb{N}$.  As $y\in K_{l+1}$ there exists an element $h\in \overline{V_{l+1}}$ such that $y = h_1(h_2(\cdots h_{l+1}h^{m_{l+1}}  \cdots)^{m_2})^{m_1}$.  Letting $h' = h_{l+1}h^{m_{l+1}}$ we can argue as before that $\phi(h') = 1_G$ and that $\phi(h) = 1_G$, whence $\phi(h_{l+1}) = 1_G$, contrary to our choice of $h_{l+1}\notin\ker(\phi)$.
\end{proof}

\end{section}

\begin{section}{Proof of Theorems \ref{tonsofexamples} and \ref{countableslender}}\label{examples}

We recall some definitions in anticipation of proving Theorem \ref{tonsofexamples}.  Recall that a \emph{length function} on a group $G$ is a function $l:G\rightarrow [0, \infty)$ satisfying

\begin{enumerate}[(a)]

\item $l(1_G) = 0$

\item $l(g) = l(g^{-1})$

\item $l(gh) \leq l(g) + l(h)$

\end{enumerate}

\noindent We shall say a length function is a \emph{Dudley norm} if it takes values in $\mathbb{N}\cup\{0\}$ and satisfies $l(g^n) \geq \max\{n, l(g)\}$ whenever $g \neq 1_G$ (this is the norm described in \cite{D}).  A length function $l$ is \emph{uniformly monotone} if there exists $k\in \mathbb{N}$ such that for all $g\neq 1_G$ we have $l(g^k) \geq l(g)+1$ (see \cite{C1}).  

For a review of the Baumslag-Solitar groups the reader may go to Appendix \ref{BSgroups}.  Recall that Thompson's group $F$ is the set of piecewise linear self-homeomorphisms on $[0, 1]$ which are non-differentiable at finitely many points and have derivatives that are powers of $2$ at points of differentiability (see \cite{CFP} for an exposition).  We restate Theorem \ref{tonsofexamples} and provide the proof.

\begin{B}  The following have a limiting sequence pair:

\begin{enumerate}[(a)]
\item Groups with Dudley norm

\item Groups with uniformly monotone length function

\item Countable torsion-free groups with finite $k$-antecedents for some $k\in \mathbb{N}$

\item Countable torsion-free groups with finite roots

\item Free groups

\item Free abelian groups

\item $\mathbb{Z}[\frac{1}{m}]$ for each $m\in \mathbb{N}$

\item Torsion-free word hyperbolic groups

\item Baumslag-Solitar groups

\item Thompson's group $F$
\end{enumerate}

\end{B}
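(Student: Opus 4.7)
The plan is to exhibit a limiting sequence pair for each class (a)--(j). The work is organized as a small number of base constructions, from which several cases follow by reduction; the hardest cases will be (i) and (j), which do not fit cleanly into the reduction scheme and require group-specific constructions.

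For (a), given a Dudley norm $l$, I set $F_n = \{g \in G : l(g) \leq n\}$ and $k_n = n+1$. Subadditivity of $l$ yields (1) (take $m = l(g)+n$), the Dudley inequality $l(g^{k_n}) \geq k_n > n$ for $g \neq 1_G$ yields (2), and $l(g) \leq l(g^{k_m})$ yields (3). For (b), with $k$ the uniform monotonicity constant, I set $F_n = \{g : l(g) < n\}$ and $k_n = k^n$; induction using $l(g^k) \geq l(g) + 1$ for $g \neq 1_G$ (so that $g^{k^i} \neq 1_G$ at each stage) gives $l(g^{k^n}) \geq l(g) + n$, from which all three conditions follow.

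For (c), enumerate $G = \{g_1, g_2, \ldots\}$ and for $g \in G$ let $A(g) = \{h : h^{k^n} = g \text{ for some } n \in \mathbb{N}\}$, which is finite by hypothesis. I build $F_n$ inductively by forming the finite seed $S_n = \{1_G, g_1, \ldots, g_n\} \cup F_{n-1} \cup \bigcup_{i \leq n} g_i F_{n-1}$ and setting $F_n = \bigcup_{f \in S_n} A(f)$; this $F_n$ is finite, the implication $h^{k^m} \in A(f) \Rightarrow h \in A(f) \subseteq F_n$ gives (3), and the presence of $g_i F_{n-1}$ in $S_n$ gives (1). For (2), torsion-freeness forces the orbit $g, g^k, g^{k^2}, \ldots$ of each non-identity element of $F_n$ to consist of distinct elements, so by finiteness of $F_n$ some power $g^{k^{i_0}}$ leaves $F_n$, and closure of $F_n$ under $k$-antecedents shows the orbit never returns; picking $N$ uniformly over the finite set $F_n \setminus \{1_G\}$ and setting $k_n = k^N$ gives (2). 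Then (d) is immediate since finite roots implies finite $k$-antecedents for every $k$; (e) and (f) follow from (a) using word length on a free group and $\ell^1$-norm on a free abelian group (in the free case, cyclically reduced form gives $l(g^n) \geq \max\{n, l(g)\}$); (g) follows from (c) by choosing $k \geq 2$ coprime to $m$, since then $k^n h = g$ in $\mathbb{Z}[\frac{1}{m}]$ forces $k^n$ to divide the numerator of $g$, leaving only finitely many $n$; and (h) follows from (d), since in a torsion-free word hyperbolic group the centralizer of any non-trivial element is infinite cyclic, so every root of $g$ lies in a unique maximal infinite cyclic subgroup containing $g$, of which only the finitely many exponent-divisors give roots.

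The main obstacle lies in (i) and (j). For Baumslag-Solitar groups the reduction to (c) fails because elements such as $b$ acquire infinitely many $k$-antecedents via conjugates $a^{-j} b a^j$; instead I would work with Britton normal forms in the HNN decomposition and define $F_n$ using a bound on the stable-letter exponent sum together with a bound on the base-group coordinate that is controlled under taking $k$-th powers, choosing $k_n$ compatibly (the appendix on Baumslag-Solitar groups referenced in the excerpt should provide the necessary normal form machinery). For Thompson's group $F$, I would exploit the dyadic piecewise-linear structure: every element has integer log-slopes at the endpoints $0$ and $1$, and taking a $k_n$-th root divides each such log-slope by $k_n$; defining $F_n$ by bounding these log-slopes together with a combinatorial complexity measure of the break-point data, and letting $k_n$ grow fast enough to outrun any non-identity element, should yield a limiting sequence pair. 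Verifying the three axioms for $F$---the content of the new result answering the question from \cite{C1}---is where I expect the most delicate work.
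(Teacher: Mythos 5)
Your cases (a)--(h) are essentially the paper's own argument: balls of the length function with $k_n=n+1$ or $k_n=k^n$, finite antecedent-closed sets $F_n$ for countable torsion-free groups with finite $k$-antecedents, and the reductions of (d)--(h) to these. (The paper obtains condition (1) in case (c) for free from finiteness of the $F_n$ together with $\bigcup_n F_n=G$, rather than seeding $F_n$ with the translates $g_iF_{n-1}$; also make sure your $A(g)$ contains $g$ itself, i.e.\ allows exponent $k^0$, or your $F_n$ need not be nested. These are cosmetic.)

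The genuine gap is in (i) and (j), which you leave as unexecuted sketches. For (i), your premise that the reduction to (c) fails is mistaken. It is true that $BS(m,n)$ can fail to have finite $k$-antecedents for small $k$ --- when $m=-n$ some elements even have infinitely many square roots --- but the hypothesis of (c) only requires finite $k$-antecedents for \emph{some} $k$, and the right move is to take $k=p$ a prime exceeding the length of the relator. Two facts then combine to give finite $p$-antecedents: $BS(m,n)$ has unique $p$-th root extraction for $p>|m|,|n|$ (a Britton's Lemma normal-form argument, Proposition \ref{BSlemma} of the appendix), so each $p^r$-th root is unique if it exists; and by Newman's theorem \cite{Ne} no nontrivial element of a one-relator group is divisible by infinitely many powers of a prime larger than the relator length, so only finitely many $r$ occur. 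Your proposed direct construction of $F_n$ from Britton normal forms would have to confront exactly the phenomenon you noticed: condition (3) forces $F_n$ to absorb all $k_m$-th roots of its elements, which is uncontrollable unless the $k_m$ are powers of such a prime, and you verify nothing. For (j), the endpoint-slope observation says nothing about elements supported away from $0$ and $1$; the missing ingredient is that Thompson's group $F$ is torsion-free with finite roots (Lemma 15.29 of \cite{GS}), after which (j) is an immediate instance of your own case (d). As written, the two items that constitute the paper's new content are not proved.
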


\begin{proof}  For (a) we let $F_n$ be the closed ball $B(1_G, n) = \{g\in G\mid l(g) \leq n\}$ and let $k_n = n+1$.  Conditions (1) - (3) are straightforward to check.  For (b) we similarly let $F_n = B(1_G, n)$ and $k_n = k^n$, where $k$ is the natural number associated with the uniform monotonicity.

For (c) we let $\Ant_k(g) = \{h\in G\mid(\exists n\geq 0)[h^{k^n} = g]\}$.  Let $G = \{g_1 = 1_G, g_2, \ldots\}$ be an enumeration of $G$ and let $F_n = \Ant_k\{g_1, \ldots, g_n\}$.  For each $n\in \mathbb{N}$ pick $k_n$ to be a large enough power $k^m$ of $k$ so that for $j \geq m$ we have $x, x^{k^j}\in F_n$ implies $x = 1_G$ (here we are using the fact that $G$ is torsion-free and $F_n$ is finite).  That $\bigcup_{n\in\mathbb{N}} F_n = G$ holds is clear.  As each $F_n$ is finite we automatically get condition (1) from $\bigcup_{n\in\mathbb{N}} F_n = G$.  Condition (2) follows from our choice of $k_n$.  For condition (3), if $g\notin F_m$ then no $g^{k^j}$ is in $F_m$ for any $j\in \mathbb{N}$ since $F_m$ is closed under taking $k$-antecedents.  The groups (d) are a subclass of (c).

For (e) and (f) we may apply any one of (a), (b) or (c).  For (g) we pick a prime $p>m$ and note that $\mathbb{Z}[
\frac{1}{m}]$ has finite $p$-antecedents and is countable torsion-free, and apply (c).

For (h) we note that torsion-free word hyperbolic groups are certainly countable.  Such groups also have unique root extraction (see Proposition 2.16 of \cite{BV}).  In a torsion-free word hyperbolic group the centralizer of every element is isomorphic to $\mathbb{Z}$ (see again Proposition 2.16 in \cite{BV}), so in such a group there are finite roots and we are done by (d).  Alternatively, torsion-free word hyperbolic groups have a uniformly monotone length function (see \cite{C1}) and so we can apply part (b).

For (i) we fix $m, n \in \mathbb{Z} \setminus\{0\}$ and note that $BS(m, n)$ has unique $p$-th root extraction for any prime $p>|m|, |n|$ (see Appendix A Proposition \ref{BSlemma}).  By Theorem 1 of \cite{Ne} we know that if $p$ is a prime greater than the length of the relator for a one-relator group then elements of that group are not divisible by more than finitely many powers of $p$.  Thus for any prime $p>|m| + |n| +2$ we get finite $p$-antecedents and we are done by (c).

For (j), the group $F$ is torsion-free and has finite roots (see Lemma 15.29 of \cite{GS}).  The group $F$ is finitely presented and therefore countable, and so we apply (d).
\end{proof}

\begin{remark}  Not every countable abelian n-slender group has a limiting sequence pair where the $F_n$ are finite.  Consider the abelian group $G = \bigoplus_{m\in \mathbb{N}}\mathbb{Z}[\frac{1}{m}]$.  Were $(F_n)_n$, $(k_n)_n$ to be a limiting sequence pair, we select $m\in \mathbb{N}$ large enough that $F_m \cap \mathbb{Z}[\frac{1}{k_1}] \neq \{0\}$.  Picking $g\in F_m \cap \mathbb{Z}[\frac{1}{k_1}] \setminus\{0\}$ we have by condition (3) that all the $k_1^r$ roots of $g$ are in $F_m$, and thus we get infinitely many elements in $F_m$.  It is not clear whether $G$ has a limiting sequence pair.  However, $G$ is n-, cm- and lcH-slender by Theorem \ref{graphproducts} or by Theorem \ref{countableslender}.
\end{remark}

\begin{lemma}\label{-by-}  An extension of an n-slender, cm-slender, or lcH-slender group by a group of the same type is again such a group.
\end{lemma}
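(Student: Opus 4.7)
The plan is to treat all three slenderness properties uniformly. Write the extension as a short exact sequence $1 \to K \to E \to Q \to 1$ with quotient map $q \colon E \to Q$, where $K$ and $Q$ are both of the same slenderness type. Given a homomorphism $\phi$ from an appropriate test object to $E$, I first compose with $q$ and use slenderness of $Q$ to locate a ``large'' substructure of the domain whose $\phi$-image lies inside $K$; restricting $\phi$ to this substructure and invoking slenderness of $K$ then yields the desired openness or triviality.

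For the n-slender case, starting from $\phi \colon \HEG \to E$, n-slenderness of $Q$ applied to $q \circ \phi$ produces an $N$ with $\phi(\HEG^N) \subseteq K$. The key observation is that the reindexing $a_{N+j} \mapsto a_j$ gives a canonical isomorphism $\HEG^N \cong \HEG$ under which, for each $M$, the subgroup $\HEG^{N+M}$ of $\HEG^N$ corresponds to $\HEG^M$. Applying n-slenderness of $K$ to the restriction $\phi|_{\HEG^N} \colon \HEG^N \to K$ (via this isomorphism) thus yields an $M$ with $\phi|_{\HEG^{N+M}}$ trivial, which is exactly what is needed to conclude $E$ is n-slender.

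For the cm-slender and lcH-slender cases, I compose $\phi \colon H \to E$ with $q$ and use slenderness of $Q$ to conclude that $U := \ker(q \phi)$ is open in $H$. Then $\phi|_U$ takes values in $K$, and slenderness of $K$ applied to this restriction shows that $\ker(\phi|_U)$ is open in $U$, hence open in $H$. For this step to be legitimate I need $U$ to fall within the class of allowed test objects: open subgroups of locally compact Hausdorff groups are trivially locally compact Hausdorff, while open subgroups of completely metrizable groups are completely metrizable because an open subset of a completely metrizable space is $G_\delta$ and hence admits a compatible complete metric by Alexandrov's theorem, which together with the inherited subspace topology makes $U$ into a completely metrizable topological group. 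The main subtlety is precisely this last topological point; the algebraic content reduces to two successive applications of the slenderness hypothesis.
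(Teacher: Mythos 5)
Your proof is correct and follows essentially the same route as the paper: slenderness of the quotient is applied first to cut the domain down to a sub-object mapping into $K$, and then slenderness of $K$ finishes; for the n-slender case the paper simply cites \cite{C1}, and your explicit argument via the reindexing isomorphism $\HEG^N\cong\HEG$ is the standard one. The only cosmetic difference is that in the completely metrizable case you invoke Alexandrov's theorem to re-metrize the open subgroup $\ker(q\circ\phi)$, whereas the paper observes more directly that an open subgroup is also closed, hence complete in the restricted metric.
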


\begin{proof}  That an n-slender-by-n-slender group is again n-slender is known (Lemma 10 in \cite{C1}, for example).  Suppose we have a short exact sequence $1\rightarrow K \rightarrow^{\iota} G\rightarrow^{q} Q\rightarrow 1$ with both $K$ and $Q$ cm-slender and let $\phi:H\rightarrow G$ be a homomorphism from a completely metrizable group $H$.  As the homomorphism $q\circ \phi$ is to a cm-slender group, we know that the kernel is a clopen subgroup in $H$ and that $\phi(\ker(q\circ \phi))\leq \ker(q) = K$.  Since $\ker(q\circ\phi)$ is itself a completely metrizable group and $K$ is cm-slender we know $\ker(\phi) =  \ker (\phi|\ker(q\circ \phi))$ is a clopen subgroup of $\ker(q\circ \phi)$ and therefore also a clopen subgroup of $H$.  The proof in case $H$ is locally compact Hausdorff is completely analogous.
\end{proof}

\begin{theorem}\label{graphproducts}  The classes of n-slender, cm-slender and lcH-slender groups are each closed under taking graph products.  In particular, each class is closed under free products and direct sums.
\end{theorem}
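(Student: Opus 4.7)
The plan divides into three parts. Since graph-product closure for n-slender groups is already established in \cite{C1}, I focus on cm- and lcH-slender, handling them in parallel via diagonalization arguments modeled on Propositions \ref{lspcmslender} and \ref{lsplcHslender}.

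First, I establish closure under arbitrary free products for the cm- and lcH-slender classes. Given a homomorphism $\phi \colon H \to \ast_{i \in I} G_i$ with $H$ completely metrizable (resp.\ locally compact Hausdorff) and $\ker\phi$ not open, pick a sequence $(h_n)$ in $H \setminus \ker\phi$ converging rapidly to $1_H$; imitating the telescoping-product construction of Proposition \ref{lspcmslender} (resp.\ Proposition \ref{lsplcHslender}), but with the syllable length of the free-product normal form playing the role of the family $(F_n)$, build an element $y_1 \in H$ whose image would necessarily have unbounded syllable length in the free product --- an impossibility.

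Second, I treat finite graph products by induction on $|V(\Gamma)|$. For a chosen vertex $v$, the short exact sequence
\[
1 \to K_v \to G_\Gamma \to G_{\Gamma \setminus v} \to 1,
\]
with $K_v$ the normal closure of $G_v$, has slender quotient by the inductive hypothesis. The normal closure $K_v$ is itself a free product of conjugates of $G_v$ (amalgamated along the link subgroup generated by $G_w$ for $w$ adjacent to $v$), so a secondary argument via the first step shows $K_v$ is slender, after which Lemma \ref{-by-} gives that $G_\Gamma$ is slender.

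Third, I pass to arbitrary graphs. Any violation of openness for $\phi \colon H \to G_\Gamma$ can be localized using the retractions $r_S \colon G_\Gamma \to G_S$ for finite $S \subseteq V(\Gamma)$: a bad sequence involves only countably many vertex groups, and further diagonalization confines the contradiction to a finite subgraph product, handled by the previous step. Closure under free products and direct sums then follows as the special cases where $\Gamma$ has no edges and where $\Gamma$ is complete, respectively. The main obstacle is the first part, in which the syllable-length diagonalization must be carried out without assuming the factors have an LSP and without producing unwanted cancellations in the free product; exponents in the telescoping product must be selected to preserve growth of syllable length.
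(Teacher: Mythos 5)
Your first step contains a genuine gap, and it is the one you yourself flag without resolving. Syllable length in a free product cannot play the role of the sets $(F_n)$ in a limiting sequence pair, because condition (2) fails badly: every power of an element that lies in (or is conjugate into) a single factor has syllable length bounded independently of the exponent, so no choice of exponents forces ``$x^{k}\in F_n$ implies $x=1_G$.'' Running the telescoping construction of Proposition \ref{lspcmslender} with these $F_n$ therefore yields at best that certain limit elements map to elements conjugate into a vertex group, not that they map to $1_G$. Moreover, no argument of this shape can succeed while ignoring the slenderness of the factors: $\mathbb{Q}*\mathbb{Q}$ retracts onto $\mathbb{Q}$, which is not cm-slender, so a free product of non-slender groups need not be slender, and any correct proof must feed the cm-/lcH-slenderness of each $G_i$ into the diagonalization --- something your sketch of step one never does. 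A secondary issue: the normal closure of a vertex group in a graph product is a genuine free product of conjugates (indexed by cosets of the link subgroup), not an amalgam along the link. And in your third step, the case of a complete graph on an infinite vertex set is the direct sum, where ``further diagonalization confines the contradiction to a finite subgraph'' is precisely the support argument of Eda (Theorem 3.6 of \cite{E}) that constitutes the entire content of that case; it is not a routine localization.

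The paper takes a shorter route that repairs exactly this defect. By \cite{C1} the kernel of the canonical surjection from the graph product onto the direct sum $\bigoplus_v G_v$ admits a uniformly monotone length function, so by Theorem \ref{tonsofexamples}(b) and Theorem \ref{lspslender} that kernel is n-, cm- and lcH-slender with no hypothesis whatsoever on the vertex groups; Lemma \ref{-by-} then reduces the whole theorem (free products included) to closure under direct sums. The direct sum case is where the slenderness of the factors enters: one chooses each $h_{n+1}$ in a neighborhood so small that its image under $\phi$ has support disjoint from $\supp(\phi(h_1)),\ldots,\supp(\phi(h_n))$ (this uses cm-slenderness of each $G_i$), forms the infinite products $y_n=\lim_q h_n\cdots h_q$, and derives $\phi(h_{j+1})=1_G$ from the finiteness of $\supp(\phi(y_1))$. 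If you want to salvage your outline, replace your syllable-length step with this two-stage decomposition; your induction on vertices and the passage to infinite graphs then become unnecessary.
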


\begin{proof}  It was shown in \cite{C1} that the n-slender groups are closed under graph products, and we use essentially the same argument here for the other two classes.  In \cite{C1} it was shown that the kernel of the map $\sigma: \Gamma(V, \{G_v\}_{v\in V(\Gamma)}) \rightarrow \bigoplus_{v\in V}$ from the graph product to the direct sum is uniformly monotone.  Thus by Lemma \ref{-by-} it suffices to show closure of the remaining two classes under direct sum.  For this we give an argument analogous to that of Theorem 3.6 in \cite{E}.

Let $G = \bigoplus_{i\in I}G_i$ be a direct sum of cm-slender groups and let $\phi:H \rightarrow G$ be a homomorphism with $H$ a topological group completely metrized by $d$.  In case the conclusion is false we select $h_1 \in H \setminus \ker(\phi)$.  For each $i\in I$ let $p_i: G \rightarrow G_i$ be the projection map.  Given $g\in G$ we let $\supp(g) = \{i\in I\mid p_i(g) \neq 1\}$ denote the support of $g$.  As the $G_i$ are all cm-slender, there exists $\epsilon_1>0$ such that $h\in B(1_H, \epsilon_1)$ implies $p_i\circ \phi(h) = 1$ for all $i\in \supp(\phi(h_1))$.  Pick a neighborhood $V_1$ of $1_H$ such that $h\in V_1$ implies

\begin{center}  $d(1_H, h) <\frac{\epsilon_1}{4}$

$d(h_1, h_1h)< \frac{\epsilon_1}{4}$
\end{center}

Select $h_2\in V_1 \setminus \ker(\phi)$.  Now there exists $\frac{\epsilon_1}{2}>\epsilon_2>0$ such that $h\in B(1_H, \epsilon_2)$ implies $p_i\circ \phi(h) =1$ for all $i\in \supp(\phi(h_2))$.  Pick a neighborhood $V_2$ of $1_H$ such that $h\in V_2$ implies

\begin{center}  $d(1_H, h) < \frac{\epsilon_2}{4}$

$d(h_2, h_2h)<\frac{\epsilon_2}{4}$

$d(h_1h_2, h_1h_2 h) <\frac{\epsilon_2}{4}$
\end{center}

Select $h_3\in V_2$.  Continuing to select $V_n$, $\epsilon_n$, and $h_n$ in this manner we get that for each $n\in \mathbb{N}$ the sequence $(h_n\cdots h_q)_q$ is Cauchy in the variable $q \geq n$.  Letting $y_n = \lim_{q\rightarrow \infty} h_n\cdots h_q$ we see in fact that $d(1_H, y_{n+1})\leq \frac{\epsilon_n}{2}<\epsilon_n$ and the relations $y_m = h_m\cdots h_ny_{n+1}$ hold for $m\leq n$.  Also, the supports $\supp(\phi(h_n))$ are pairwise disjoint and $\bigcup_{m=1}^{n}\supp(\phi(h_m))$ is disjoint from $\supp(\phi(y_{n+1}))$ by the selection of $\epsilon_n$.

As $y_1 = h_1\cdots h_my_{m+1}$ we have $\supp(\phi(y_{m+1})) \subseteq \supp(\phi(h_1))\cup\cdots\cup\supp(\phi(h_m))\cup \supp(\phi(y_1))$, so that in fact $\supp(\phi(y_{m+1})) \subseteq \supp(\phi(y_1))$ for all $m\in \mathbb{N}$.    As $\supp(\phi(y_1))$ is finite we can select $j\in \mathbb{N}$ such that $\supp(\phi(y_1))\cap(\bigcup_{n\geq j}\supp(\phi(h_n)))=\emptyset$.  Now for $m\geq j$ we know $\supp(\phi(y_{m+1}))$ is disjoint from $\bigcup_{n\in \mathbb{N}}\supp(\phi(h_n))$ as $\supp(\phi(y_{m+1}))$ is disjoint from $\bigcup_{n\leq m}\supp(\phi(h_n))$ as well as from $\bigcup_{n> m}\supp(\phi(h_n))$.  But $\supp(\phi(h_{j+1})) \subseteq \supp(\phi(y_{j+1}))\cup \supp(\phi(y_{j+2}))$, so in fact $\supp(\phi(h_{j+1})) = \emptyset$ and $\phi(h_{j+1})=1_G$, contrary to our selection.

The proof for $H$ a locally compact Hausdorff group is similar.  Let $V_0$ be an open neighborhood of $1_H$ with $\overline{V_0}$ compact.  Pick $h_1\in V_0 \setminus \ker(\phi)$.  Select a neighborhood $V_1$ of $1_H$ with $\overline{V_1} \subseteq V_0$ such that $h\in V_1$ implies $p_i(\phi(h))=1_G$ for all $i\in \supp(\phi(h_1))$ and $h_1h\in V_0$.  Pick a neighborhood $V_1'$ of $1_H$ such that $\overline{V_1'} \subseteq V_1$.  Pick $h_2\in V_1' \setminus \ker(\phi)$.  Let $V_2$ be an open neighborhood of $1_H$ with $\overline{V_2}\subseteq V_1'$ such that $h\in V_2$ implies $p_i(\phi(h))=1$ for all $i\in \supp(\phi(h_2))$ and $h_2h\in V_1'$.  Continue selecting $h_n,  V_n$, and $V_n'$ in this way.  Let $K_n = h_1\cdots h_n\overline{V_n'}$ and select $y\in \bigcap_{n\in \mathbb{N}}K_n$.  Pick $j\in \mathbb{N}$ such that $\supp(\phi(y))$ is disjoint from $\bigcup_{n\geq j}\supp(\phi(h_n))$.  Pick $h\in \overline{V_{j+2}'}$ such that $y = h_1\cdots h_{j+2}h$ and let $h' = h_{j+2}h$.  We have $h' = h_{j+2}h\in V_{j+2}'\subseteq V_{j+1}$ and $h\in \overline{V_{j+2}'}\subseteq V_{j+1}$.  As before we can argue that $\phi(h_{j+2}) = 1_G$, a contradiction.
\end{proof}

\begin{remark}  Notice that no cm-slender or lcH-slender groups can have torsion or contain $\mathbb{Q}$ as a subgroup.  If $\mathbb{Q} \leq G$ then there is a discontinuous homomorphism from $\mathbb{R}$ to $\mathbb{Q}$ (constructed by taking a Hamel basis of $\mathbb{R}$ over $\mathbb{Q}$).  If $G$ contains a torsion element, then $G$ contains an element of prime order, so in particular $\mathbb{Z}/p\mathbb{Z} \leq G$.  The product $\prod_{\mathbb{N}}\mathbb{Z}/p\mathbb{Z}$ is a compact Polish group and by using a vector space argument we may again construct a discontinuous homomorphism from the product to $\mathbb{Z}/p\mathbb{Z}$.  
\end{remark}

Recall that an abelian group $A$ is \emph{slender} if for each homomorphism $\phi: \prod_{\mathbb{N}} \mathbb{Z} \rightarrow A$ there exists a natural number $n$ such that $\phi = \phi \circ p_n$ where $p_n:  \prod_{\mathbb{N}}\mathbb{Z} \rightarrow \bigoplus_{i=1}^n\mathbb{Z}$ is projection to the first $n$ coordinates.  It is known that the abelian n-slender groups are precisely the slender groups (see Theorem 3.3 of \cite{E}).  One could ask how the abelian cm-slender and lcH-slender groups compare with the slender groups.  Since $\prod_{\mathbb{N}}\mathbb{Z}$ is a Polish group we easily have that an abelian cm-slender group must certainly be slender.  Theorem \ref{countableslender} provides a partial answer.  Towards this theorem we prove the following proposition which is of interest in its own right.

\begin{proposition}\label{artinian}  If $H$ is either completely metrizable or locally compact Hausdorff, $\phi:H\rightarrow G$ is a group homomorphism and $\card(G) < 2^{\aleph_0}$ then there exists an open neighborhood $V$ of $1_H$ such that for any neighborhood $1_H \in V' \subseteq V$ we have $\phi(V') = \phi(V)$.
\end{proposition}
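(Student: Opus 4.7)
My plan is to reduce the conclusion to the assertion that $K := \overline{\ker \phi}$ is open in $H$, and then rule out the alternative by producing $2^{\aleph_0}$ distinct elements of $\phi(H) \subseteq G$ via a Cantor-scheme construction.  The reduction is quick: $K$ is a closed (normal) subgroup of $H$, and for any open neighborhood $V$ of $1_H$ with $V \subseteq K$ and any neighborhood $V' \subseteq V$ of $1_H$, we have $\phi(V') = \phi(V)$.  For the nontrivial inclusion, pick $v \in V \subseteq \overline{\ker \phi}$; the neighborhood $v \cdot (V')^{-1}$ of $v$ meets $\ker \phi$, producing $u \in (V')^{-1}$ with $vu \in \ker \phi$, so $\phi(v) = \phi(u^{-1})$ with $u^{-1} \in V'$, whence $\phi(v) \in \phi(V')$.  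Thus if $K$ is open then the conclusion holds with $V := K$, and it suffices to show $K$ is open.

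Assume for contradiction that $K$ is not open.  I would inductively construct a Cantor scheme of nonempty open sets $U_s \subseteq H$ indexed by finite binary strings $s \in 2^{<\omega}$ satisfying
\begin{enumerate}
\item $\overline{U_{s0}}, \overline{U_{s1}} \subseteq U_s$ and $\overline{U_{s0}} \cap \overline{U_{s1}} = \emptyset$;
\item $\overline{U_{s0}}^{-1} \cdot \overline{U_{s1}} \cap K = \emptyset$;
\item $\operatorname{diam}(U_s) < 2^{-|s|}$ in the completely metrizable case, while $\overline{U_s}$ is compact in the locally compact Hausdorff case.
\end{enumerate}
For the inductive step from $U_s$, I pick $y_0 \in U_s$; since $y_0^{-1} U_s$ is an open neighborhood of $1_H$ not contained in the non-open set $K$, some $y_1 \in U_s$ satisfies $y_0^{-1} y_1 \notin K$.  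Using that $K$ is closed, I choose an open $W$ with $y_0^{-1} y_1 \in W$ and $\overline{W} \cap K = \emptyset$.  Then continuity of $(a,b) \mapsto a^{-1} b$ together with the regularity of the Hausdorff topological group $H$ allows me to select open neighborhoods $U_{s0} \ni y_0$ and $U_{s1} \ni y_1$ whose closures are disjoint, lie inside $U_s$, and satisfy $\overline{U_{s0}}^{-1} \cdot \overline{U_{s1}} \subseteq W$; the diameter and compactness conditions in (3) are arranged by further shrinking.

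Once the scheme is built, for each $\epsilon \in 2^{\mathbb{N}}$ the nested intersection $\bigcap_n \overline{U_{\epsilon|n}}$ is nonempty (by completeness of the metric in one case, compactness in the other), so pick some $x_\epsilon$ in it.  If $\epsilon \neq \epsilon'$ first disagree at position $k$ and $s = \epsilon|k$, then property (2) yields $x_\epsilon^{-1} x_{\epsilon'} \notin K \supseteq \ker \phi$, hence $\phi(x_\epsilon) \neq \phi(x_{\epsilon'})$.  This yields $2^{\aleph_0}$ distinct elements of $\phi(H) \subseteq G$, contradicting $\card(G) < 2^{\aleph_0}$.  The main obstacle is arranging condition (2) for the closures $\overline{U_{si}}$ rather than merely for the open sets $U_{si}$; this requires a careful use of regularity and continuity of the group operations to shrink neighborhoods sufficiently at each stage before the next inductive step.
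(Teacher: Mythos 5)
Your proof is correct. At its core it runs the same engine as the paper's: a binary (Cantor-scheme) construction producing $2^{\aleph_0}$ points of $H$ whose $\phi$-images are pairwise distinct, contradicting $\card(G)<2^{\aleph_0}$. But your decomposition is genuinely different. The paper negates the conclusion directly to obtain balls $D_k$ about $1_H$ with $\phi(D_k)\supsetneq\phi(D_{k+1}D_{k+1}^{-1})$, picks witnesses $h_k\notin\phi^{-1}(\phi(D_{k+1}D_{k+1}^{-1}))$, and realizes the scheme as the translates $h_1^{\epsilon_1}\cdots h_m^{\epsilon_m}D_{k_m+1}$, with completeness (resp.\ compactness) used to make the infinite products $h_1^{\epsilon_1}h_2^{\epsilon_2}\cdots$ converge. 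You instead isolate the key lemma that the conclusion follows from (and is in fact equivalent to) openness of $\overline{\ker\phi}$ --- your density argument for $\phi(V)\subseteq\phi(V')$ when $V\subseteq\overline{\ker\phi}$ is right, and note that conversely any $V$ as in the conclusion must lie in $\overline{\ker\phi}$ --- and then run an abstract Cantor scheme of open sets with $\overline{U_{s0}}^{-1}\overline{U_{s1}}$ disjoint from $\overline{\ker\phi}$; the inductive step needs only that a non-open subgroup contains no neighborhood of $1_H$, regularity of topological groups, and continuity of $(a,b)\mapsto a^{-1}b$, all of which you invoke correctly (and condition (2) already forces $\overline{U_{s0}}\cap\overline{U_{s1}}=\emptyset$ since $1_H\in\overline{\ker\phi}$). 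Your route buys an explicit and sharper conclusion ($V=\overline{\ker\phi}$ works, which also makes transparent the paper's remark that $\phi(V)$ is a subgroup) and a more uniform treatment of the two topological cases; the paper's route produces the concrete infinite-product elements $h_1^{\epsilon_1}h_2^{\epsilon_2}\cdots$, which is the template it reuses in the proof of Theorem \ref{countableslender}.
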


\begin{proof}  We first prove the claim in case $H$ is completely metrizable.  Let $d$ be a complete metric which induces the topology on $H$.  Supposing the claim is false we get a sequence $(n_k)_k$ of natural numbers such that:

\begin{enumerate}
\item $n_{k+1} >2n_k$

\item $\phi(B(1_H, \frac{1}{n_k}))\supsetneq \phi(B(1_H, \frac{1}{n_{k+1}})B(1_H, \frac{1}{n_{k+1}})^{-1})$
\end{enumerate}

Again, we are using closed balls $B(1_H, \lambda) = \{x\in H\mid d(1_H, x) \leq \lambda\}$.  Let $D_k = B(1_H, \frac{1}{n_k})$.  Define a subsequence $(k_m)_m$ and sequence $(h_m)_m$ in $H$ as follows.  Let $k_1 = 1$ and select $h_1\in D_1$ such that $\phi(h_1)\notin \phi(D_2D_2^{-1})$.  Pick $k_2> k_1+1$ such that $h\in D_{k_2}$ implies

\begin{center}  $d(h_1, h_1h)<\frac{1}{2n_{k_1+1}}$

$d(1_H, h)<\frac{1}{2n_{k_1+1}}$
\end{center}

Select $h_2\in D_{k_2}$ such that $\phi(h_2)\notin \phi(D_{k_2 +1}D_{k_2 + 1}^{-1})$.  Select $k_3>k_2+1$ such that $h\in D_{k_3}$ implies $d(h_1^{\epsilon_1}h_2^{\epsilon_2}, h_1^{\epsilon_1}h_2^{\epsilon_2}h)<\frac{1}{2n_{k_2+1}}$ for any $\epsilon_1, \epsilon_2\in \{0, 1\}$.  If one has selected $h_1, \ldots, h_m$ and $k_1, \ldots, k_m$ we select $k_{m+1}>k_m+1$ such that $h\in D_{k_{m+1}}$ implies $d(h_1^{\epsilon_1}\cdots h_m^{\epsilon_m}, h_1^{\epsilon_1}\cdots h_m^{\epsilon_m}h)<\frac{1}{2n_{k_{m}+1}}$ for any sequence $(\epsilon_1, \ldots, \epsilon_m)$ in $0$s and $1$s.  Select $h_{m+1}\in D_{k_{m+1}}$ such that $\phi(h_{m+1})\notin \phi(D_{k_{m+1}+1}D_{k_{m+1}+1}^{-1})$.

By construction, for every sequence $\epsilon\in \{0, 1\}^{\mathbb{N}}$ the sequence $(h_1^{\epsilon_1}\cdots h_m^{\epsilon_m})_m$ is Cauchy and converges to, say, $h_{\epsilon}$.  Also, for every $m>1$ and $\epsilon = (\epsilon_1, \ldots)$ we have $h_{\epsilon}\in h^{\epsilon_1}\cdots h_m^{\epsilon_m}D_{k_m +1}$.  By construction we know $$\phi(h_1^{\epsilon_1}\cdots h_{m-1}^{\epsilon_{m-1}}h_m^0D_{k_m+1}) \cap \phi(h_1^{\epsilon_1}\cdots h_{m-1}^{\epsilon_{m-1}}h_m^1D_{k_m+1}) = \emptyset$$  Thus for distinct $\epsilon, \epsilon'\in \{0, 1\}^{\mathbb{N}}$ we have $\phi(h_{\epsilon})\neq\phi(h_{\epsilon'})$, contradicting our assumption on the cardinality of $G$.

The proof for $H$ locally compact Hausdorff is similar.  Let $V_0$ be an open neighborhood of $1_H$ such that $\overline{V_0}$ is compact.  Pick a sequence $V_0 \supseteq V_1 \supseteq V_1'\supseteq V_2 \supseteq V_2' \supseteq V_3$ of open neighborhoods of $1_H$ such that $\overline{V_{m+1}}\subseteq V_m'$ and $\phi(V_n)\supsetneq \phi(V_n'(V_n')^{-1})$.  Select $h_n\in V_n$ such that $\phi(h_n) \notin \phi(V_n'(V_n')^{-1})$.  For each $\epsilon \in \{0, 1\}^{\mathbb{N}}$ and $m\in \mathbb{N}$ we let $K_{\epsilon, m} = h_1^{\epsilon_1}\cdots h_m^{\epsilon_m}\overline{V_{m+1}}$.  Select $h_{\epsilon}\in \bigcap_{m\in \mathbb{N}}K_{\epsilon, m}$.  Notice that $h_{\epsilon}\in h_1^{\epsilon_1}\cdots h_m^{\epsilon_m}\overline{V_{m+1}}\subseteq  h_1^{\epsilon_1}\cdots h_m^{\epsilon_m}V_m'$.  Arguing as before we get $\phi(h_{\epsilon})\neq \phi(h_{\epsilon'})$ for $\epsilon\neq \epsilon'$.
\end{proof}

For $V$ satisfying the conclusion of Proposition \ref{artinian} the image $\phi(V)$ is a subgroup of $G$ regardless of whether $V$ was itself a subgroup of $H$, which is not obvious a priori.  For the following theorem we use the fact from \cite{Sa} that a torsion-free abelian group of cardinality $<2^{\aleph_0}$ is slender if and only if it is reduced (i.e. if and only if $\bigcap_{m\in \mathbb{N}}mA = \{0\}$, or if and only if $\mathbb{Q}$ is not a subgroup).

\begin{C}  If $A$ is an abelian group of cardinality $<2^{\aleph_0}$ the following are equivalent:

\begin{enumerate}
\item $A$ is slender

\item $A$ is n-slender

\item $A$ is cm-slender

\item $A$ is lcH-slender
\end{enumerate}

\end{C}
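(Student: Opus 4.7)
The plan is to combine previously established material with the new Proposition~\ref{artinian}. The implication $(1) \Leftrightarrow (2)$ is Theorem~3.3 of \cite{E}, which applies to all abelian groups without any cardinality restriction. The implication $(3) \Rightarrow (1)$ follows from the observation already recalled in the paragraph preceding the theorem: $\prod_{\mathbb{N}}\mathbb{Z}$ is Polish, so cm-slenderness of $A$ forces any homomorphism $\prod_{\mathbb{N}}\mathbb{Z}\to A$ to have open kernel, and every open subgroup of $\prod_{\mathbb{N}}\mathbb{Z}$ contains some basic subgroup $\ker(p_N)$. The implication $(4) \Rightarrow (1)$ combines the Remark preceding Proposition~\ref{artinian} (an lcH-slender group has no torsion and no copy of $\mathbb{Q}$, hence is reduced in the abelian case) with the theorem of \cite{Sa} converting reducedness and cardinality $< 2^{\aleph_0}$ into slenderness; this is the only step for which the cardinality hypothesis is genuinely used.

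What remains is to prove $(1) \Rightarrow (3)$ and $(1) \Rightarrow (4)$. Fix slender abelian $A$ with $|A| < 2^{\aleph_0}$; by $(1)\Leftrightarrow(2)$, $A$ is also n-slender. Let $\phi : H \to A$ be a homomorphism with $H$ completely metrizable (respectively locally compact Hausdorff), and suppose for contradiction that $\ker\phi$ is not open. Apply Proposition~\ref{artinian} to obtain an open neighborhood $V$ of $1_H$ such that $\phi(V') = \phi(V)$ for every open $V' \subseteq V$ containing $1_H$; by the paragraph following the proposition, $B := \phi(V)$ is a subgroup of $A$, necessarily nonzero since otherwise $V \subseteq \ker\phi$. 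Fix $g \in B \setminus \{0\}$.

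From here the strategy is to exploit n-slenderness by constructing a homomorphism $\psi : \HEG \to A$ with $\psi(a_n) = g$ for every $n$. Inductively I would select a nested chain $V \supseteq V_1 \supseteq V_2 \supseteq \cdots$ of open neighborhoods of $1_H$ shrinking sufficiently fast, together with preimages $h_n \in V_n$ satisfying $\phi(h_n) = g$; such $h_n$ exist because $g \in B = \phi(V_n)$. Arranging the rate of decay of the $V_n$ so that the assignment $a_n \mapsto h_n$ extends to a homomorphism $\Psi : \HEG \to H$, the composite $\psi := \phi \circ \Psi : \HEG \to A$ then satisfies $\psi(a_n) = g \ne 0$ for every $n$, directly contradicting n-slenderness, which forces $\psi|_{\HEG^N}$ trivial for some $N$ and hence $\psi(a_{N+1}) = 0$.

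The main obstacle is ensuring that $\Psi : \HEG \to H$ is well defined, because HEG-words may carry unboundedly many copies of any single generator. In the completely metrizable case I would first replace $H$ by its abelian quotient $H/\overline{[H,H]}$ (through which $\phi$ factors since the image is abelian) and equip it with a complete bi-invariant metric from Birkhoff--Kakutani, arranging the diameters of $V_n$ so that the partial evaluations $p_N(W)(h_1,\ldots,h_N)$ of every HEG-word $W$ form a Cauchy sequence in $N$. In the locally compact Hausdorff case I would instead work inside a compact neighborhood of $1_H$ and extract convergent subsequences by sequential compactness, paralleling the nested compact sets $K_n$ in the proof of Proposition~\ref{lsplcHslender}. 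This is the same infinitary-multiplication technology underlying the proofs in Section~\ref{Proofofmaintheorem}, and once it is in place the contradiction with n-slenderness closes immediately.
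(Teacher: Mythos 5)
Your reductions are fine and match the paper's: $(1)\Leftrightarrow(2)$ is Eda's theorem from \cite{E}, $(3)\Rightarrow(1)$ follows from $\prod_{\mathbb{N}}\mathbb{Z}$ being Polish, $(4)\Rightarrow(1)$ follows from the Remark together with \cite{Sa}, and opening the proof of $(1)\Rightarrow(3),(4)$ with Proposition \ref{artinian} and the nonzero subgroup $B=\phi(V)$ is exactly what the paper does. The argument breaks at the central step, the construction of $\Psi:\HEG\rightarrow H$ with $\Psi(a_n)=h_n$. A homomorphism out of $\HEG$ must be defined on \emph{every} infinite word, and over the set of all words the total exponent of a fixed letter $a_n$ is unbounded: the words $a_1^{c_1}a_2^{c_2}\cdots$ realize every exponent sequence $(c_n)\in\prod_{\mathbb{N}}\mathbb{Z}$. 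So for your partial evaluations to be Cauchy for every word you would need $\sum_n c_nh_n$ to converge for every integer sequence $(c_n)$, which fails for any choice of $h_n\neq 1_H$ in, say, $H=\mathbb{R}$; no rate of decay of the $V_n$ repairs this, because the word (hence the exponents) is chosen after the $h_n$. The machinery of Section \ref{Proofofmaintheorem} never produces a homomorphism on all of $\HEG$; it only produces individual elements $y_n$ as limits of one specific nested product in which each $h_n$ occurs to a single prescribed power fixed in advance. A secondary problem: $\phi$ is not assumed continuous, so having abelian image only forces $\phi$ to kill $[H,H]$, not $\overline{[H,H]}$, and the factorization through $H/\overline{[H,H]}$ is unjustified.

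The paper's actual endgame avoids defining any map on $\HEG$ or on $\prod_{\mathbb{N}}\mathbb{Z}$: from $a\in\phi(V)\setminus\{1_A\}$ it uses reducedness to find $m$ with $a\notin A^m$, torsion-freeness (unique roots) to get $a^r\notin A^{rm}$ for all $r$, and then builds only continuum many elements $h_\epsilon=h_1^{\epsilon_1}(h_2^{\epsilon_2}(h_3^{\epsilon_3}(\cdots)^{3m})^{2m})^{m}$ with $\phi(h_n)=a^n$, which exist because the exponents are prescribed before the $h_n$ are selected from ever smaller neighborhoods. Since $\card(A)<2^{\aleph_0}$, two of these have equal image under $\phi$, and cancelling the common head yields $a^n\in A^{nm}$, a contradiction. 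Note that the cardinality hypothesis is therefore used a third time, in this pigeonhole, not only in the step via \cite{Sa} and inside Proposition \ref{artinian}. To salvage an n-slenderness (or slenderness) route you would have to explain how to evaluate arbitrary words or arbitrary elements of $\prod_{\mathbb{N}}\mathbb{Z}$ in $H$, which is precisely the obstruction the paper's diagonalization is designed to sidestep.
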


\begin{proof}  What needs to be shown is that if $H$ is completely metrizable or locally compact Hausdorff and $A$ is abelian, torsion-free, reduced and of cardinality $<2^{\aleph_0}$ then any homomorphism $\phi:H\rightarrow A$ has open kernel.  To avoid confusion we use multiplicative notation with the abelian group $A$.  Thus $1_A$ is identity and $A^j$ is the subgroup consisting of those elements of $A$ which have a $j$ root.  Supposing that $H$ is completely metrizable, by Proposition \ref{artinian} we select a neighborhood $V$ of $1_H$ such that any open neighborhood of $1_H$ inside $V$ has image $\phi(V)$ under $\phi$.  If $a\in \phi(V)\setminus \{1_A\}$, we have that $a\notin A^m$ for some $m\in \mathbb{N}$.  Since $A$ is torsion-free we have unique root extraction.  Thus $a^{r} \notin A^{rm}$ for every $r\in \mathbb{N}$.  Pick a sequence $(h_n)_n$ so that $\phi(h_n) = a^n$ and so that $h_{\epsilon} = h_1^{\epsilon_1}(h_2^{\epsilon_2}(h_3^{\epsilon_3}(\cdots)^{3m})^{2m})^{m}$ exists for any $\epsilon\in \{0,1\}^{\mathbb{N}}$.  As $\card(G)<2^{\aleph_0}$ there exist distinct $\epsilon, \epsilon'\in \{0,1\}^{\mathbb{N}}$ such that $\phi(h_\epsilon) = \phi(h_{\epsilon'})$.  Let $n\in \mathbb{N}$ be the smallest such that $\epsilon_n \neq \epsilon_n'$.  Modifying $\epsilon$ and $\epsilon'$ to have $0$ entries up to the $n-1$ place we get again $\phi(h_{\epsilon}) = \phi(h_{\epsilon'})$.  If without loss of generality $\epsilon_n = 1$ we let $\epsilon''$ be equal to $\epsilon$ except with a $0$ at position $n$.  Thus $h_nh_{\epsilon''} = h_{\epsilon}$ and 

\begin{center}
$a^{n} = \phi(h_n) = \phi(h_{\epsilon}h_{\epsilon''}^{-1})= \phi(h_{\epsilon'}h_{\epsilon''}^{-1})\in A^{nm}$
\end{center}

\noindent which is a contradiction.

The proof for $H$ a locally compact Hausdorff group is similar.  If one wishes, a similar argument shows that a torsion-free reduced abelian group of cardinality $<2^{\aleph_0}$ is n-slender by using Theorem 4.4 (1) of \cite{CC}.  
\end{proof}
\end{section}

\begin{section}{Conclusion}\label{conclusion}
We end with some discussion and questions.  First we ask about a possible approach to generalizing part (i) of Theorem  \ref{tonsofexamples}.

\begin{question}  If $G$ is a torsion-free one-relator group and $p$ is a prime greater than the length of the relator then does $G$ have unique $p$-roots? 
\end{question}

\noindent If one has an affirmative answer to this question, then by the same proof as Theorem \ref{tonsofexamples} part (i) we get that torsion-free one-relator groups have a limiting sequence pair.  Thus such groups would be n-slender, which was conjectured in \cite{Na}.

For the next question we recall that a group element $g\in G\setminus\{1_G\}$ is \emph{divisible} if $g$ has an $n$-root in $G$ for every $n\in \mathbb{N}$.

\begin{question} Does a countable torsion-free group with no divisible elements have a limiting sequence pair?
\end{question}

We finally remark that, as far as the authors are aware, the following are the known techniques for determining that a nonabelian group is slender (in the n-, cm-, or lcH- sense):

\begin{itemize}\item Express the group as a slender extension of a slender group (as in Lemma \ref{-by-}).

\item Algebraically decompose the group as a graph product of slender groups (as in Theorem \ref{graphproducts}).

\item Algebraically decompose the group as a certain type of HNN-extension or amalgamated free product (as in \cite{Na}).

\item Perform a diagonalization argument as in the proof of Theorem \ref{lspslender} by constructing an element in the domain of form $h_1(h_2(h_3(\cdots )^{m_3})^{m_2})^{m_1}$ (as is also done in \cite{H} and \cite{D}).  
\end{itemize}

\begin{question}  What other forms of argument can be used to demonstrate these types of slenderness in a nonabelian group?

\end{question}

\end{section}

\appendix

\begin{section}{Baumslag-Solitar Groups}\label{BSgroups}

Recall that the Baumslag-Solitar groups are the groups of form $BS(m, n) = \langle a, t| t^{-1}a^m t = a^n\rangle$ where $m, n \in \mathbb{Z}\setminus \{0\}$.  We review some of the properties of words in the letters $\{a^{\pm1}, t^{\pm 1}\}$ before stating and proving Lemma \ref{BSlemma}, which was used in Theorem \ref{tonsofexamples}.  The elements $a, t$ obviously generate $G = BS(m, n)$.  If $w, w'$ are words in the letters $\{a^{\pm1}, t^{\pm 1}\}$ we'll write $w \equiv w'$ if $w$ is the same word as $w'$ and we'll write $w = _G w'$ if $w$ represents the same element in $G$ as does $w'$, and $w =_G g$ for $g\in G$ if $w$ represents the group element $g$.  Given a word $w\in \{a^{\pm1}, t^{\pm 1}\}^*$ we may repeatedly perform free reductions and obtain a word $w'$ which has no subwords $a^{\pm 1}a^{\mp 1}$ or $t^{\pm 1}t^{\mp 1}$ which represents the same element of the free group $F(a, t)$ and therefore $w =_G w'$ a fortiori.  Such a freely reduced word may be written as $w' \equiv a^{k_0}t^{\epsilon_1}a^{k_1}\cdots t^{\epsilon_j}a^{k_j}$ where $k_i \in \mathbb{Z}$, $\epsilon_i\in \{\pm 1\}$, and $[\epsilon_i = \pm1 \wedge k_i=0]\Rightarrow \epsilon_{i+1} = \pm 1$.

A freely reduced word $w \equiv a^{k_0}t^{\epsilon_1}a^{k_1}\cdots t^{\epsilon_j}a^{k_j}$ is said to be \emph{reduced} if in addition $w$ has no subword of form $t^{-1}a^{k_i}t$ with $k_i\in m\mathbb{Z}$ or of form $t a^{k_i}t^{-1}$ with $k_i \in n\mathbb{Z}$.  From a freely reduced word $w$ one can obtain a reduced word $w'$ with $w =_G w'$ by replacing subwords of form $t^{-1}a^{k_i}t$ with $k_i\in m\mathbb{Z}$ with $a^{k_i\frac{n}{m}}$ (or the other appropriate replacement), freely reducing, and repeating until this is no longer possible and we obtain such a word $w'$.  The number of letters $t^{\pm 1}$ in the word $w$ divided by two gives an upper bound on the number of times this replacement process can be iterated.

By Britton's Lemma (see \cite[Section IV.2]{LS}), for any nonempty reduced word $w$ we have $w \neq_G 1_G$.  From this one sees that if $w = a^{k_0}t^{\epsilon_1}a^{k_1}t^{\epsilon_2}a^{k_2}\cdots a^{k_{j-1}}t^{\epsilon_j}a^{k_j}$ and $w' = a^{l_0}t^{\delta_1}a^{l_2}\cdots t^{\delta_q}a^{l_{q}}$ are reduced and satisfy $w =_G w'$ we have $j = q$ and $\epsilon_i = \delta_i$ for all $1 \leq i \leq j$.  This $j$ now well-defines a length function $L(g) = j$ on $G$, with elements in $\langle a\rangle$ having length zero.

A reduced word $w = a^{k_0}t^{\epsilon_1}a^{k_1}t^{\epsilon_2}a^{k_2}\cdots a^{k_{j-1}}t^{\epsilon_j}a^{k_j}$ is said to be a \emph{normal form} if in addition $ta^{k_i}$ being a subword implies $0\leq k_i<n$ and $t^{-1}a^{k_i}$ being a subword implies $0\leq k_i<m$.  One obtains a normal form from a reduced word $w = a^{k_0}t^{\epsilon_1}a^{k_1}t^{\epsilon_2}a^{k_2}\cdots a^{k_{j-1}}t^{\epsilon_j}a^{k_j}$ by writing $k_j = qm +r$ where $0\leq r<m$, replacing $t^{\epsilon_j}a^{k_j}$ by $a^{qn}t^{\epsilon_j}a^r$ and freely reducing if $\epsilon_j = -1$, or performing the comparable switch in case $\epsilon_j = 1$.  Now we analyze the new $k_{j-1}$ and make the same replacement if possible.  Continuing in this way until one comes to $k_0$ we obtain a normal form.  Each $g\in G$ has a unique normal form $w$ with $w =_G g$.

A word of form $w\equiv a^{k_0}$ or $w \equiv a^{k_0}t^{\epsilon_1}a^{k_1}t^{\epsilon_2}a^{k_2}\cdots a^{k_{j-1}}t^{\epsilon_j}$ is said to be \emph{cyclically reduced} if every cyclic permutation on $w$ is reduced.  Each element of $BS(m, n)$ is conjugate to an element with cyclically reduced normal form.  Let the \emph{cyclic length} of an element $g$ be the minimal length of a cyclically reduced word representing an element conjugate to $g$.  An element is of cyclic length $0$ if and only if it is conjugate to an element of $\langle a\rangle$, and so powers of an element of cyclic length $0$ are also of cyclic length $0$.

\begin{proposition}\label{BSlemma}  Given $m, n \in \mathbb{Z} \setminus \{0\}$ we have the following:

\begin{enumerate}  \item If $p$ is a prime with $p>|m|, |n|$ then $p$-th roots in $BS(m, n)$ of elements in $\langle a\rangle$ must also be elements of $\langle a\rangle$.

\item  If $g\in BS(m, n)$ is of cyclic length at least one then $g$ has unique odd root extraction.
\end{enumerate}

Thus the group $BS(m, n)$ has unique $p$-th root extraction for any prime $p$ satisfying the hypotheses of (1).
\end{proposition}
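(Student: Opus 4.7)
The strategy is to exploit the HNN structure of $BS(m,n)$ and the normal-form machinery recalled above, treating separately the elliptic ($p$-th root has cyclic length $0$) and hyperbolic ($p$-th root has cyclic length $\geq 1$) cases in the Bass-Serre tree. A key preliminary, used in both parts, is that cyclic length is multiplicative under powering: if $\tilde h$ is cyclically reduced of $t$-length $j \geq 1$, then $\tilde h^p$ admits no Britton pinch at any seam (by definition of cyclic reducedness), so it is cyclically reduced of $t$-length $pj \geq 1$.

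For part (1), suppose $h^p = a^k \in \langle a \rangle$. The preliminary forces $h$ to have cyclic length $0$, so $h = u a^s u^{-1}$ for some $u \in BS(m,n)$ and $s \in \mathbb{Z}$, with $u$ in normal form $a^{r_0} t^{\delta_1} a^{r_1} \cdots t^{\delta_l} a^{r_l}$. The word $u a^{sp} u^{-1}$ reduces to $a^k$ via a chain of $l$ innermost Britton pinches, producing exponents $e_0 = sp, e_1, \ldots, e_l = k$, where each step $e_{i-1} \mapsto e_i$ multiplies by $m/n$ or $n/m$ and requires $e_{i-1}$ to be divisible by $n$ or $m$ respectively. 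Since $p > |m|, |n|$ is prime, $\gcd(p,m) = \gcd(p,n) = 1$, so divisibility of $e_i = p e_i'$ by $m$ or $n$ is equivalent to that of $e_i'$. Applying the same sequence of reductions to $u a^s u^{-1}$ therefore yields $a^{k/p} \in \langle a \rangle$, so $h$ itself lies in $\langle a \rangle$.

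For part (2), suppose $h^{2r+1} = k^{2r+1} = g$ with $g$ of cyclic length $\geq 1$. The preliminary now forces $h$ and $k$ to be hyperbolic of the same cyclic length $j$, with $(2r+1)j$ equal to the cyclic length of $g$. After simultaneous conjugation I may take $g$ cyclically reduced. The plan is to recover $h$ uniquely from $g$ via normal forms: $h^{2r+1}$ is obtained from $2r+1$ concatenated copies of the cyclically reduced form of $h$, possibly with cascading seam shifts via $t^{-1} a^m t = a^n$, yielding the canonical form of $g$; reading back the initial $j$-block determines $h$ and forces $h = k$. In the generic case when the centralizer $C(g)$ of a hyperbolic element is infinite cyclic (a standard Bass-Serre fact for $BS(m,n)$ outside the exceptional $|m|=|n|=1$ cases), an alternative argument places $h$ and $k$ in $C(g) \cong \mathbb{Z}$, where odd roots are unique.

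Finally, for the concluding statement about unique $p$-th roots, I case-split on the cyclic length of $g := h^p = k^p$: if $g$ has cyclic length $0$, a conjugator $v$ brings $g$ into $\langle a \rangle$, part (1) places $v^{-1}hv$ and $v^{-1}kv$ into $\langle a \rangle \cong \mathbb{Z}$, and uniqueness of $p$-th roots in $\mathbb{Z}$ yields $h = k$; if $g$ has cyclic length $\geq 1$, then $p \geq 3$ is odd (outside the exceptional $|m|=|n|=1$ cases, which specialize to $\mathbb{Z}^2$ or the Klein bottle group and are handled by direct inspection), so part (2) yields $h = k$. The main anticipated obstacle is the combinatorial bookkeeping in part (2): controlling the cascading seam shifts when the leading $a$-exponent of $h$ lies outside the canonical range $[0,|m|)$ or $[0,|n|)$, and verifying these shifts are determined by $g$ alone so that $h$ is uniquely recoverable.
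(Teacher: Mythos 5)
Your part (1) is essentially correct and is the paper's argument in slightly different clothing: both first reduce to the case where the root is elliptic (your multiplicativity of cyclic length under powering does this), write it as $ua^su^{-1}$, and use $\gcd(p,m)=\gcd(p,n)=1$ to transfer the chain of Britton pinches from $ua^{sp}u^{-1}$ to $ua^su^{-1}$; the paper phrases the same divisibility observation as a contradiction with a minimal-length conjugator. The final deduction from (1) and (2) also matches the paper's intent, though note that for $|m|=|n|=1$ your "direct inspection" cannot succeed as stated: in $BS(1,-1)$ the prime $p=2$ satisfies $p>|m|,|n|$ and yet $t^2$ has infinitely many square roots $(a^kt)^2=_Gt^2$, a defect already present in the proposition's own phrasing (the paper's application only ever uses $p>|m|+|n|+2$).

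The genuine gap is part (2), which is the heart of the proposition. You set up the right framework (two odd roots of a cyclically reduced $g$ are cyclically reducible of the same $t$-length, and after conjugation their normal forms can differ only in the leading $a$-exponent), but the step you defer as "the main anticipated obstacle" is precisely the step that has to be proved, and it is the only place where oddness of the exponent can enter. Concretely, if $w=a^{k_0}t^{\epsilon_1}\cdots t^{\epsilon_j}$ and $w'=a^{k_0'}t^{\epsilon_1}\cdots t^{\epsilon_j}$ satisfy $w^q=_G(w')^q$, the cascading seam shifts yield $k(T^{q-1}+T^{q-2}+\cdots+T+1)=0$ with $k=k_0-k_0'$ and $T$ the product of the factors $n/m$ or $m/n$ collected along one period; uniqueness then rests on the fact that $x^{q-1}+\cdots+x+1$ has no rational roots when $q-1$ is even (its only candidate rational roots are $\pm1$). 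For even $q$ the root $T=-1$ is realized whenever $m=-n$, and this is exactly the counterexample $(a^kt)^2=_Gt^2$; any correct argument must detect this dichotomy, and your sketch contains no mechanism that does. Your centralizer fallback does not repair the gap: the exceptional cases are not only $|m|=|n|=1$ but all of $|m|=|n|$ (in $BS(2,-2)$ one has $(a^2t)^2=_Gt^2$ and the centralizer of $t^2$ contains the non-cyclic subgroup $\langle a^2,t\rangle$), the claim that $C(g)\cong\mathbb{Z}$ for every hyperbolic $g$ in the remaining $BS(m,n)$ is asserted without proof, and if it did hold it would give unique roots of all orders, which the $m=-n$ examples show is too strong. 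The paper, for these reasons, splits part (2) into the cases $m=-n$ (handled by explicit normal forms) and $m\neq-n$ (handled by the polynomial identity above); your outline would need to supply both.
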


\begin{proof}  For (1) we suppose that $g =_G w^{-1}a^l w$ is a $p$-th root of $a^k$.  We may assume that $w$ is a reduced word of form $w \equiv t^{\epsilon_1}a^{k_1}t^{\epsilon_2}\cdots t^{\epsilon_j}a^{k_{j}}$ and of shortest length among such words.  We have $(w^{-1}a^lw)^p =_G w^{-1}a^{pl}w$.  Since $w^{-1}a^{pl}w =_G a^k$ we know that $pl \in m\mathbb{Z}$ (in case $\epsilon_1 = 1$) or $pl \in n\mathbb{Z}$ (in case $\epsilon_1 = -1$).  But since $p>|n|, |m|$ we know that $l\in  m\mathbb{Z}$ or $l \in n\mathbb{Z}$.  But now $g =_G w_0 a^{l\frac{m}{n}}w_0$ or $g =_G w_0 a^{l\frac{n}{m}}w_0$ where $w \equiv t^{\epsilon_1}a^{k_1}w_0$, so in fact $w$ must have been of length $0$ and $g\in \langle a \rangle$.

For (2) let $q$ be a positive odd number and let $g$ have positive cyclic length.  By conjugating $g$ if necessary, we may assume without loss of generality that the normal form of $g$ is cyclically reduced.  Then any $q$ root of $g$ must also have a normal form which is cyclically reduced.  We treat cases.  Suppose first that $m = -n$.  Suppose $w\equiv a^{k_0}t^{\epsilon_1}a^{k_1}\cdots a^{\epsilon_{j-1}}t^{\epsilon_j}$ and $w' \equiv a^{k_0'}t^{\epsilon_1}a^{k_1}\cdots a^{\epsilon_{j-1}}t^{\epsilon_j}$ are in normal form and satisfy $w^q =_G g =_G (w')^q$, we let $k_0 = s+ lm$ and $k_0' = s' +l'm$  (that only the $k_0$ and $k_0'$ exponents could possibly differ follows from performing the normal form algorithm to the words $w^q$ and $(w')^q$).  If $j$ is an even number then the normal form of $w^q$ is 

\begin{center} $a^{k_0 + (q-1)lm}(t^{\epsilon_1}a^{k_1}\cdots t^{\epsilon_j}a^{s})^{q-1}(t^{\epsilon_1}a^{k_1}\cdots t^{\epsilon_j})$
\end{center}

\noindent and similarly the normal form of $(w')^q$ is

\begin{center} $a^{k_0' + (q-1)l'm}(t^{\epsilon_1}a^{k_1}\cdots t^{\epsilon_j}a^{s'})^{q-1}(t^{\epsilon_1}a^{k_1}\cdots t^{\epsilon_j})$
\end{center}

\noindent In particular we have $s = s'$ and also $k_0 + (q-1)lm = k_0' + (q-1)l'm$, which implies $qlm = ql'm$, so $l = l'$ and $k_0 = k_0'$.  If $j$ is an odd number then the normal form of $w^q$ is

\begin{center} $a^{k_0}(t^{\epsilon_1}a^{k_1}\cdots t^{\epsilon_j}a^{s})^{q-1}(t^{\epsilon_1}a^{k_1}\cdots t^{\epsilon_j})$
\end{center}

\noindent and similarly the normal form of $(w')^q$ is 

\begin{center} $a^{k_0'}(t^{\epsilon_1}a^{k_1}\cdots t^{\epsilon_j}a^{s'})^{q-1}(t^{\epsilon_1}a^{k_1}\cdots t^{\epsilon_j})$
\end{center}

\noindent so that again $k_0 = k_0'$.

The second case for proving (2) is that wherein $m \neq -n$.  We again suppose that $w\equiv a^{k_0}t^{\epsilon_1}a^{k_1}\cdots a^{k_{j-1}}t^{\epsilon_j}$ and $w' \equiv a^{k_0'}t^{\epsilon_1}a^{k_1}\cdots a^{k_{j-1}}t^{\epsilon_j}$ are in normal form and satisfy $w^q =_G g =_G  (w')^q$.  As

\begin{center} $(a^{k_0}t^{\epsilon_1}a^{k_1}\cdots a^{\epsilon_{j-1}}t^{\epsilon_j})^q =_G (a^{k_0'}t^{\epsilon_1}a^{k_1}\cdots a^{k_{j-1}}t^{\epsilon_j})^q$
\end{center}

\noindent we can cancel on the right by $t^{\epsilon_1}a^{k_1}\cdots a^{\epsilon_{j-1}}t^{\epsilon_j}$ and obtain that 

\begin{center} $(a^{k_0}t^{\epsilon_1}a^{k_1}\cdots a^{k_{j-1}}t^{\epsilon_j})^{q-1}a^{k_0} =_G (a^{k_0'}t^{\epsilon_1}a^{k_1}\cdots a^{k_{j-1}}t^{\epsilon_j})^{q-1}a^{k_0'}$
\end{center}

\noindent Define $\Frac(\epsilon_i)$ to be $\frac{n}{m}$ if $\epsilon_i = 1$ and to be $\frac{m}{n}$ if $\epsilon_i = -1$.  Assume for contradiction that $k_0 \neq k_0'$ and let without loss of generality $k_0 > k_0'$.  Define $k = k_0 - k_0'$.  We know that

\begin{center} $1_G =_G a^{-k_0'} (t^{-\epsilon_j}a^{-k_{j-1}}\cdots a^{-k_1}t^{\epsilon_1}a^{-k_0'})^{q-1}(a^{k_0}t^{\epsilon_1}a^{k_1}\cdots a^{k_{j-1}}t^{\epsilon_j})^{q-1}a^{k_0}$
\end{center}

\noindent and since both $a^{-k_0'} (t^{-\epsilon_j}a^{-k_{j-1}}\cdots a^{-k_1}t^{\epsilon_1}a^{-k_0'})^{q-1}$ and $(a^{k_0}t^{\epsilon_1}a^{k_1}\cdots a^{k_{j-1}}t^{\epsilon_j})^{q-1}a^{k_0}$ are reduced, it must be that $k \in m \mathbb{Z}$ if $\epsilon_1 = 1$ or that $k\in n\mathbb{Z}$ if $\epsilon_1 = -1$.  Thus $t^{-\epsilon_1}a^kt^{\epsilon_1} =_G a^{k\Frac(\epsilon_1)}$.  Reasoning in the same way we see that $k\Frac(\epsilon_1) \in m\mathbb{Z}$ if $\epsilon_2 = 1$ and $k\Frac(\epsilon_1) \in n\mathbb{Z}$ if $\epsilon_2 = -1$.  Continuing in this way we see that 

\begin{center} $1_G =_G  a^{-k_0'} (t^{-\epsilon_j}a^{-k_{j-1}}\cdots a^{-k_1}t^{\epsilon_1}a^{-k_0'})^{q-2}a^{k\Frac(\epsilon_1)\cdots\Frac(\epsilon_j)}(a^{k_0}t^{\epsilon_1}a^{k_1}\cdots a^{k_{j-1}}t^{\epsilon_j})^{q-2}a^{k_0}$
\end{center}

\noindent Letting $T = \Frac(\epsilon_1)\Frac(\epsilon_2)\cdots \Frac(\epsilon_j)$ we perform the above process again to see that 

\begin{center} $1_G =_G  a^{-k_0'} (t^{-\epsilon_j}a^{-k_{j-1}}\cdots a^{-k_1}t^{\epsilon_1}a^{-k_0'})^{q-3}a^{kT^2 + kT}(a^{k_0}t^{\epsilon_1}a^{k_1}\cdots a^{k_{j-1}}t^{\epsilon_j})^{q-3}a^{k_0}$
\end{center}

\noindent Iterating this process $q-3$ more times shows $1_G =_G a^{-k_0'}a^{k(T^{q-1} +T^{q-2} + \cdots + T)} a^{k_0}$.  Thus $0 = k(T^{q-1} +T^{q-2} + \cdots + T +1)$, and since $k \neq 0$ by assumption we have $T^{q-1} +T^{q-2} + \cdots + T +1 = 0$.  The polynomial $x^{q-1} + x^{q-2} + \cdots +x +1$ can only have $1$ or $-1$ as rational roots, but as $q-1$ is even we know that neither can be roots, and so $x^{q-1} + x^{q-2} + \cdots +x +1$ cannot have rational roots.  But $T$ is a root and rational by construction, a contraditcion.
\end{proof}

We note that in the group $BS(1, -1)$ the element $t^2$ has infinitely many $2$-roots.  This can be seen by noticing that $(a^kt)^2 =_G a^kta^kt =_G a^ka^{-k}t^2 =_G t^2$ for any $k\in \mathbb{Z}$ and the $a^kt$ are distinct as elements in $G$ since they are all in normal form.  Other such examples can be contructed when $m = -n$.

\end{section}

\bibliographystyle{amsplain}

\begin{thebibliography}{abcdefghijk}

\bibitem[BV]{BV}  O. Bogopolski, E. Ventura, \emph{On endomorphisms of torsion-free hyperbolic groups}, Int. J. Algebra Comput. 21 (2011), 1415-1446.

\bibitem[CC]{CC} J. Cannon and G. Conner, \emph{On the fundamental groups of
one-dimensional spaces}, Topology and its Applications  153 (2006), 2648-2672.

\bibitem[CFP]{CFP} J. Cannon, W. Floyd, W. Parry, \textit{Introductory notes on Richard Thompson's groups}, L'Enseign. Math. 42 (1996), 215-256.

\bibitem[C1]{C1} S. Corson, \textit{Torsion-free word hyperbolic groups are n-slender}, Int. J. Algebra Comput. 26 (2016), 1467-1482.

\bibitem[C2]{C2}  S. Corson, \emph{On definable subgroups of the fundamental group},  arXiv:1610.00145.

\bibitem[D]{D} R. Dudley, \textit{Continuity of homomorphisms}, Duke Math. J. 28 (1961), 587-594.

\bibitem[E]{E} K. Eda,  \emph{Free $\sigma$-products and noncommutatively slender groups}, J. Algebra 148 (1992), 243-263.

\bibitem[GS]{GS} V. S. Guba, M. S. Sapir, Diagram Groups, Memoirs of the AMS, 130 (1997)

\bibitem[H]{H} G. Higman,  \emph{Unrestricted free products and varieties of topological groups},  J. London Math. Soc. 27 (1952), 73-81.

\bibitem[LS]{LS} R. Lyndon, P. Schupp, Combinatorial Group Theory, Springer-Verlag, (1977)

\bibitem[Na]{Na} J. Nakamura, \emph{Atomic properties of the Hawaiian earring group for HNN extensions}, Comm. in Algebra 43 (2015), 4138-4147.

\bibitem[Ne]{Ne} B. Newman, \emph{Some results on one-relator groups}, Bull. Amer. Math.Soc. 74 (1968), 568-571.

\bibitem[Sa]{Sa} E. S\c asiada, \emph{Proof that every countable and reduced torsion-free abelian group is slender}, Bull. Acad. Polon. Sci. 7 (1959), 143-144.

\bibitem[Sp]{Sp} E. Specker. \emph{Additive Gruppen von folgen Ganzer Zahlen}, Portugal. Math. 9 (1950), 131-140.

\end{thebibliography}

\end{document}